\newtheorem{defi}{Definition}
\newtheorem{prop}{Proposition}
\newtheorem{tm}{Theorem}
\newtheorem{lema}{Lemma}
\newtheorem{kor}{Corollary}
\theoremstyle{remark}
\newtheorem{remark}{Remark}
\newtheorem{ex}[remark]{Example}
\title{Constructions of self-orthogonal and LCD subspace codes}
\author{Dean Crnkovi\'c \thanks{Faculty of Mathematics, University of Rijeka, Radmile Matej\v ci\'c 2, 51000 Rijeka, Croatia. \texttt{deanc@math.uniri.hr}} \and
Keita Ishizuka \thanks{Information Technology R\&D Center, Mitsubishi Electric Corporation, Kanagawa, Japan. \texttt{ishizuka.keita@ce.mitsubishielectric.co.jp}} \and
 Hadi Kharaghani\thanks{Department of Mathematics and Computer Science, University of Lethbridge,
Lethbridge, Alberta, T1K 3M4, Canada. \texttt{kharaghani@uleth.ca}} \and 
Sho Suda \thanks{Department of Mathematics, National Defense Academy of Japan. 2-10-20 Hashirimizu, Yokosuka, Kanagawa, 239-8686, Japan. \texttt{ssuda@nda.ac.jp}} \and
Andrea \v Svob \thanks{Corresponding author. Faculty of Mathematics, University of Rijeka, Radmile Matej\v ci\'c 2, 51000 Rijeka, Croatia. \texttt{asvob@math.uniri.hr}}
}
\date{\today}
\begin{document}
\maketitle

\begin{abstract}
Recently, the notions of self-orthogonal subspace codes and LCD subspace codes were introduced, and LCD subspace codes obtained from mutually unbiased weighing matrices were studied. In this paper, we provide a method of constructing self-orthogonal and LCD subspace codes from a set of matrices under certain conditions. In particular, we give constructions of self-orthogonal and LCD subspace codes from mutually quasi-unbiased weighing matrices, linked systems of symmetric designs, and linked systems of symmetric group divisible designs, Deza graphs and their equitable partitions.
\end{abstract}


{\bf Keywords:} self-orthogonal code, LCD code, subspace code, mutually unbiased weighing matrix, Hadamard matrix, Deza graph, association scheme.  

{\bf Mathematical subject classification (2020):} 05B20, 05E30, 94B05.

\section{Introduction}\label{intro_new}

In 2000, Ahlswede, Cai, Li and Yeung (see \cite{network}) introduced network coding.
A network is a directed multigraph consists of vertices of different types. The source vertices transmit messages to the sink vertices through a channel of inner vertices. 
The idea of network coding is to allow mixing of data at these intermediate network vertices. 
A receiver observes the data packets to deduce the messages that were originally intended for the sinks. 
A special case arises when the packets are interpreted as vectors of symbols from a finite field, and the mixing functions are linear transformations.
This is the case of linear network coding. 

In 2006, Ho, M\'{e}dard, K\"{o}tter, Karger, Effros, Shi and Leong demonstrated that the so-called multicast capacity of a network is achieved with high probability in a sufficiently large field by a random choice of local mixing functions (see \cite{random}). In that way, random  network coding was introduced. Since the linear transformation by the channel of the
transmitted vectors is not known in advance by the transmitter or any of the receivers, such a model is sometimes referred to as a non-coherent transmission model. To achieve information transmission in random linear network coding, one might seek a property of the transmitted vectors that is preserved by the operation of the channel. In 2008, in their seminal paper K\"{o}tter and Kschischang
considered information transmission not via the transmitted vectors, but rather by the vector space that they span (see \cite{network-coding}). These types of codes are called subspace codes.
K\"{o}tter and Kschischang proved that subspace codes are efficient for transmission in networks, and their applications in error correction for random network coding attract a wide attention in recent research. See for example \cite{daniele, leo-constant, zhang}.

Self-orthogonal subspace codes were introduced in \cite{dsa-as}, with a construction by association schemes.  
Further, LCD subspace codes were introduced in \cite{dc-as-lcd}, and it was shown that under certain conditions equitable partitions of association schemes yield LCD subspace codes. Constructions from mutually unbiased weighing matrices were also given in \cite{dc-as-lcd}. In this paper, we give constructions of self-orthogonal and LCD subspace codes from mutually quasi-unbiased weighing matrices, linked systems of symmetric designs, linked systems of symmetric group divisible designs, and the equitable partitions of these designs. 

The paper is outlined as follows. In the next section, we provide the relevant background information. In Section \ref{SO-LCD-subspace}, we give constructions of self-orthogonal and LCD subspace codes from a set of integer matrices satisfying a certain assumption given in Theorem~\ref{lcd-matrices}. Examples satisfying the condition of the theorem include mutually quasi-unbiased weighing matrices, linked systems of symmetric designs and linked systems of symmetric group divisible designs, Deza graphs, and Deza digraphs of type II. 
Deza digraphs of type II were introduced and constructed in \cite{deza}. In Appendix A, we modify the construction to obtain Deza graphs, which is of independent interest.   
Further, we apply the construction to association schemes and give some examples. 


\section{Preliminaries} \label{intro}

We assume that the reader is familiar with the basic facts of coding theory (see \cite{FEC}). For recent results on subspace codes we refer the reader to \cite{network-book, heinlein-dcc}. 
Throughout, $\mathbb{F}_{q}$ denotes the finite field of order $q$ for some prime power $q$.

A $q$-ary \emph{linear code} $C$ of length $n$ and dimension $k$ is a $k$-dimensional subspace of a vector space $\mathbb{F}_{q}^n$ over $\mathbb{F}_{q}$ where $q$ is a prime power. A $q$-ary linear code of length $n$, dimension $k$, and minimum distance $d$ is called an $[n, k, d]_q$ code.
We say a $k \times n$ matrix comprised of rows that span $C$ generates $C$, or is a \emph{generator matrix} of $C$. 
The \emph{dual} code $C^\perp$ of a code $C$ is defined to be $C^{\perp} = \{ v \in \mathbb{F}_{q}^n \mid \langle v,c\rangle=0 {\rm \ for\ all\ } c \in C \}$, where $\langle x,y\rangle=\sum_{i=1}^n x_iy_i$ for $x=(x_i)_{i=1}^n,y=(y_i)_{i=1}^n\in\mathbb{F}_q^n$. 
A code $C$ is \emph{self-orthogonal} if $C \subseteq C^\perp$ and \emph{self-dual} if the equality is attained.   
Let $G$ be a generator matrix for a $q$-ary linear code $C$. The code $C$ is self-orthogonal if and only if 
$G G^{\top}=O$  
over the finite field $\mathbb{F}_{q}$, where $O$ denotes the zero matrix. 

Linear codes with complementary duals, LCD codes for short, are linear codes whose intersection with their duals are trivial.
LCD codes were introduced in 1992 by Massey (see \cite{massey}), and have been widely applied in information protection, electronics and cryptography.
Let $G$ be a generator matrix for a linear code $C$ over the field $\mathbb{F}_{q}$. The code $C$ is an LCD code if and only if $\det(GG^{\top}) \neq 0$ in $\mathbb{F}_{q}$.

In the classical coding theory, the main problem is to determine the largest minimum distance of codes for a given dimension and a length, and if possible, classify the optimal codes, i.e., the codes whose 
minimum distances achieve the theoretical upper bound on the minimum distance of $[n, k]_q$ codes.

In the subspace coding theory, the main problem is the same. In what follows, we give the basic definition from the theory of subspace codes.

A {\it subspace code} $\mathcal{C}$ is a nonempty set of subspaces of $\mathbb F^n_q$. Two metrics that are frequently used for subspace codes are subspace distance and injection distance (see \cite{network-book}).
The {\it subspace distance} is given by
\begin{equation} \label{submet}
d_S(U,W)=\dim(U+W)-\dim(U\cap W),
\end{equation}
where $U, W \in \mathcal{C}$. The {\it injection distance}, introduced in 2008 by K\"{o}tter, Kschischang and Silva (see \cite{rank-metric}), is given as follows
\begin{equation} \label{injmet}
d_I(U,W)= \max \{\dim(U), \dim(V) \} - \dim(U\cap W).
\end{equation}

In this paper, we will use the subspace distance. Hence, the {\it minimum distance} of a subspace code $\mathcal{C}$ is given by 
$$d=\min \{d_{S}(U,W) \mid  U,W \in \mathcal{C}, U \neq W \}.$$

A subspace code $\mathcal{C}$ composed of subspaces of the vector space $\mathbb F^n_q$ is called an $(n,\#\mathcal{C},d;K)_q$ subspace code if the minimum distance of $\mathcal{C}$ is $d$ and the dimensions of the codewords of $\mathcal{C}$ are contained in the set 
$K\subseteq \{0,1,\dots, n\}$. 
In the case $K=\{k\}$, a subspace code $\mathcal{C}$ is called a {\it constant dimension code} with the parameters $(n,\#\mathcal{C},d;k)_q$, otherwise, \textit{i.e.} if there are two codewords that do not have the same dimension, $\mathcal{C}$ is called a {\it mixed dimension code}. Such a subspace code is sometimes denoted by $(n,\#\mathcal{C},d)_q$. Constant dimension codes, being the $q$-analogs of the constant-weight codes, are the most studied subspace codes. 

Recently, in \cite{dsa-as}, the authors introduced self-orthogonal subspace codes.

\begin{defi}
Let ${\cal P}_q(n)$ be the set of all subspaces of $\mathbb F_q^n$. The {\it dual} code of a subspace code $\mathcal{C} \subseteq  {\cal P}_q(n)$ is the set $\mathcal{C}^{\perp}$ of all vector spaces in 
${\cal P}_q(n)$ that are orthogonal to each vector space in $\mathcal{C}$.
If $\mathcal{C} \subseteq \mathcal{C}^\perp$, then $\mathcal{C}$ is called a {\it self-orthogonal} subspace code. 
\end{defi}

The notion of LCD subspace codes was introduced in 2023 by the first and the fifth authors of the paper \cite{dc-as-lcd}.

\begin{defi}
Let $\mathcal{C}$ be a subspace code. If $C_i \cap C_j^\perp=\{0\}$ for all $C_i,C_j \in \mathcal{C}$, then $\mathcal{C}$ is called an {\it LCD subspace code}.
\end{defi}

Each codeword of an LCD subspace code $\mathcal{C}$ is an LCD code, since for every $C_i \in \mathcal{C}$ it holds that $C_i \cap C_i^\perp=\{0\}$.

Let $\mathcal{C}= \{ C_1, C_2, \ldots, C_{m} \}$ be an $(n,m,d;K)_q$ subspace code, and $G_i$ be a generator matrix of $C_i$, $1 \le i \le m$. The subspace code $\mathcal{C}$ is self-orthogonal if and only if
{$G_i G_j^{\top}=0$ for all $1 \le i, j \le m$. If 
$G_i G_j^{\top}$ is nonsingular for all $1 \le i, j \le m$, then $\mathcal{C}$ is an LCD subspace code.

In \cite{dsa-as}, the authors gave examples of self-orthogonal subspace codes, and examples of LCD codes were given in \cite{dc-as-lcd}.
In this paper we give new construction methods, by which we obtain many examples of these types of subspace codes.

We will follow the definition of a commutative association scheme given in \cite{BI}. 
Let $X$ be a finite set. A {\it commutative association scheme} with $d$ classes is a pair $(X,\mathcal{R})$ such that
\begin{enumerate}
 \item $\mathcal{R}= \{ R_0,R_1,\ldots ,R_d\}$ is a partition of $X \times X$,
 \item $R_0= \{ (x,x) \mid x \in X \} $,
 \item for any $i\in\{0,1,\ldots,d\}$, there exists $i'\in\{0,1,\ldots,d\}$ such that $R_i^\top:=\{(y,x)\mid (x,y)\in R_i\}=R_{i'}$,
 \item there are numbers $p_{ij}^k$ (the intersection numbers of the scheme) such that for any pair $(x,y) \in R_k$ the number of $z \in X$ such that $(x,z) \in R_i$ and $(z,y) \in R_j$ equals $p_{ij}^k$, 
 \item for any $i,j,k\in\{0,1,\ldots,d\}$, $p_{ij}^k=p_{ji}^k$.  
\end{enumerate} 
If $i'=i$ holds for any $i$, a commutative association scheme is said to be {\it symmetric}. 

The relations $R_i$, $i \in \{ 0,1, \ldots, d\}$, of a commutative association scheme determine the set of symmetric $(0,1)$-adjacency matrices $\mathcal{A}= \{A_0, A_1, \ldots, A_d \}$, such that $[A_i]_{xy}=1$ if and only if $(x,y) \in R_i$, which generate $(d+1)$-dimensional commutative and associative algebra over complex numbers called {\it the Bose-Mesner algebra} of the scheme. 
The matrices $\{A_0, A_1, \ldots , A_d \}$ satisfy the following equation
\begin{equation} \label{form1}
 A_i A_j=\sum_{k=0}^d p_{ij}^k A_k.
\end{equation}
Each matrix $A_i$, $i \in \{1,2, \ldots , d\}$, represents a simple directed graph $\Gamma_i$ on the set of vertices $X$, such that there is an arc from $x$ to $y$ in $\Gamma_i$ if and only if $(x,y)\in R_i$.

\section{Constructions for self-orthogonal and LCD subspace codes} \label{SO-LCD-subspace}

In this section, we provide a unified method to construct self-orthogonal and LCD subspace codes using  mutually quasi-unbiased weighing matrices, linked systems of symmetric designs, and linked systems of symmetric group divisible designs.

\begin{tm} \label{lcd-matrices}
Let $S=\{ M_1, M_2, \ldots, M_m \}$ be a set of integer square matrices of order $n$, and let $p$ be a prime number dividing the entries of $M_i M_j^\top$ for any $i,j\in\{1,2,\ldots,m\}$.   
Further, let $\mathbb{F}_q$ be the finite field of order $q$ with characteristic $p$. 
Then the following hold.
\begin{enumerate} 
\item The set of row spaces of the nonzero elements of the linear space spanned by the matrices $M_i$, $i=1,2, \ldots, m$, 
forms a self-orthogonal subspace code $\mathcal{C} \subseteq \mathbb F_q^{n}$.
\item The set of row spaces of the matrices  
$N_\alpha=\left[ \begin{array}{c|c}
X & \alpha I_n
\end{array} \right]$, $\alpha \in \mathbb F_{q} \setminus \{ 0 \}$,  
where $X$ is a nonzero element of the linear space spanned by the matrices $M_i$, $i=1,2, \ldots, m$, forms an LCD subspace code $\mathcal{C} \subseteq \mathbb F_q^{2n}$.
\end{enumerate}
\end{tm}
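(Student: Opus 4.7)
My plan is that both parts collapse to a single bilinear calculation modulo $p$, with Part 2 bootstrapping on Part 1. I will first reduce each integer matrix $M_i$ modulo $p$ so that it is viewed as an element of $\mathbb{F}_q^{n\times n}$, and write a generic element of the $\mathbb{F}_q$-linear span as $X=\sum_i a_i M_i$ with $a_i\in\mathbb{F}_q$. Throughout, I will invoke the generator-matrix criteria recalled in Section~\ref{intro}: the code $\mathcal{C}$ is self-orthogonal iff $G_iG_j^\top=O$ for all $i,j$, and $\mathcal{C}$ is LCD whenever $G_iG_j^\top$ is nonsingular for all $i,j$.

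For Part 1, I take two arbitrary nonzero elements $X=\sum_i a_i M_i$ and $Y=\sum_j b_j M_j$ of the span, and compute
\[
XY^\top = \sum_{i,j} a_i b_j\, M_i M_j^\top.
\]
By hypothesis, every entry of the integer matrix $M_i M_j^\top$ is divisible by $p$, so each $M_iM_j^\top$ reduces to the zero matrix over $\mathbb{F}_q$, and hence $XY^\top=O$ in $\mathbb{F}_q^{n\times n}$. This is precisely the orthogonality between the row space of $X$ and the row space of $Y$, so the resulting family of row spaces is a self-orthogonal subspace code in $\mathbb{F}_q^n$.

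For Part 2, I first note that $N_\alpha=[\,X\mid\alpha I_n\,]$ has rank $n$, because the right block $\alpha I_n$ is invertible whenever $\alpha\ne 0$, so the row space of $N_\alpha$ is a genuine $n$-dimensional subspace of $\mathbb{F}_q^{2n}$. Block multiplication then gives, for any two such matrices $N_\alpha$ built from $X$ and $N_\beta$ built from $Y$,
\[
N_\alpha N_\beta^\top \;=\; XY^\top + \alpha\beta\, I_n \;=\; \alpha\beta\, I_n,
\]
where the second equality is exactly the vanishing established in Part 1. Since $\alpha,\beta\in\mathbb{F}_q\setminus\{0\}$, the scalar matrix $\alpha\beta I_n$ is nonsingular, which delivers the LCD property via the generator-matrix criterion.

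There is no hard technical obstacle: the proof consists in applying the divisibility hypothesis entrywise and then reading off the two generator-matrix conditions. The one subtlety worth flagging is that the hypothesis is phrased for the \emph{integer} matrices $M_i$, so I will need to be explicit that reducing all coefficients to $\mathbb{F}_q$ preserves the identity $M_iM_j^\top=O$; this is immediate, as an integer matrix all of whose entries are divisible by $p$ reduces to the zero matrix in any field of characteristic $p$, and $\mathbb{F}_q$-linearity then propagates this to arbitrary combinations $X$ and $Y$.
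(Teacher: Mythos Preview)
Your proposal is correct and follows essentially the same approach as the paper: expand $XY^\top$ bilinearly, use the divisibility hypothesis to kill each $M_iM_j^\top$ modulo $p$, and then for Part~2 observe that $N_\alpha N_\beta^\top=XY^\top+\alpha\beta I_n=\alpha\beta I_n$ is nonsingular. If anything, your version is slightly more careful than the paper's, which only writes out $XX^\top$ rather than $XY^\top$ for two distinct elements of the span, and does not explicitly note that $N_\alpha$ has full row rank.
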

\begin{proof}
1: For a matrix $X=\sum_{i=1}^m a_iM_i$, $XX^\top =\sum_{i,j=1}^m a_ia_j M_iM_j^\top =0\pmod{p}$. 
Therefore it follows that $\mathcal{C}$ is a self-orthogonal subspace code.\\
2: Since $XX^\top=0\pmod{p}$, the matrix $N_\alpha N_\beta^\top=XX^\top+\alpha \beta I_n=\alpha \beta I_n\pmod{p}$ is a nonsingular matrix for $\alpha,\beta\in\mathbb{F}_q\setminus\{0\}$. It follows that the subspace determined by $N_\alpha$ intersects the dual of the subspace determined by $N_\beta$ trivially, so $\mathcal{C}$ is an LCD subspace code.
\end{proof}

We give examples of a set of matrices $\{M_1,\ldots,M_m\}$ satisfying the condition that the matrices $M_iM_j^\top$, $1\leq i,j\leq m$, have few entries and a prime number $p$ satisfying the assumption of Theorem~\ref{lcd-matrices}. 
\begin{ex}
A weighing matrix of order $n$ and weight $k$ is an $n\times n$ $(0,1,-1)$-matrix $W$ such that $WW^\top=kI$.  
Two weighing matrices $W_1$, $W_2$ of order $n$ and weight $k$ are said to be quasi-unbiased for parameters $(n,k,\ell,a)$ if there exist positive integers $a$, $\ell$ such that
$W_1W_2^{\top} = \sqrt{a} W$, where $W$ is a weighing matrix of order $n$ and weight $\ell$. If weighing matrices $W_1$ and $W_2$ are quasi-unbiased for parameters $(n,k,\ell,a)$, then $\ell = \frac{k^2}{a}$.
Weighing matrices $W_1, \ldots, W_m$ are said to be mutually quasi-unbiased weighing matrices (MQUWM) for parameters $(n,k,\ell,a)$ if any distinct two of them are quasi-unbiased for parameters $(n,k,\ell,a)$. 
Note that quasi-unbiased weighing matrices for parameters $(n,k,k,k)$ are known as \emph{unbiased weighing matrices of order $n$ and weight $k$}. 
The following parameters given in Table~\ref{tab:mquwm} except for $(n,n,n,n)$ are known to exist. In Table~\ref{tab:mquwm}, in the column marked with $p$, we give primes $p$ dividing $k$ and $\sqrt{a}$.

\begin{table}[H]
    \centering
    \begin{tabular}{c|c|c|c|c|c}
        $n$ &  $k$ & $\ell$ & $a$ & $p$ & Reference\\
        \hline
        $2^{2t+1}$ & $2^{2t+1}$ & $2^{2t}$ & $2^{2t+2}$ & $2$ & \cite{mquwm} \\
        $8,11,13,2d$ ($d\geq 5,d\neq 8$) & $4$ & $4$ & $4$ & $2$ & \cite{mquwm} \\
        $8$ & $8$ & $4$ & $16$ & $2$ & \cite{AHS}\\
        $12$ & $12$ & $9$ & $16$ & $2$ & \cite{AHS}\\
        $16$ & $16$ & $4$ & $64$ & $2$ & \cite{AHS}\\
        $24$ & $24$ & $4$ & $144$ & $2,3$ & \cite{AHS}\\
        $24$ & $24$ & $9$ & $64$ & $2$ & \cite{AHS}\\
        $32$ & $32$ & $4$ & $256$ & $2$ & \cite{AHS}\\
        $48$ & $48$ & $4$ & $576$ & $2,3$ & \cite{AHS}\\
        $48$ & $48$ & $9$ & $256$ & $2$ & \cite{AHS}\\
        $48$ & $48$ & $36$ & $64$ & $2$ & \cite{AHS}\\
    \end{tabular}
    \caption{Parameters of mutually quasi-unbiased weighing matrices.}
    \label{tab:mquwm}
\end{table}
\end{ex}

\begin{ex}
An \emph{orthogonal design of order $v$ and type $(s_1,\ldots,s_u)$}, detnoed $OD(v;s_1,\ldots,s_u)$, is a $v\times v$ $(0,\pm x_1,\ldots,\pm x_u)$ matrix $D$, where $x_1,\ldots,x_u$ are distinct commuting indeterminates, such that $DD^\top=(s_1x_1^2+\cdots+s_ux_u^2)I$. 

Two orthogonal designs $D_1,D_2$ of order $v$ and type $(s_1,\ldots,s_u)$ are \emph{unbiased with parameter $\alpha$} if 
\[
D_1D_2^\top=\frac{s_1x_1^2+\cdots+s_ux_u^2}{\sqrt{\alpha}}W, 
\]
where $W$ is a $(0,1,-1)$-matrix and $\alpha$ is a positive integer \cite{KS2018}. 
Note that $W$ is a weighing matrix of order $v$ and weight $\alpha$. 
Starting with an $OD(4;1,1,1,1)$ or $OD(4n;1,1,1,1)$, 
and using \cite[Proposition~3.11]{KS2018} with complete mutually suitable Latin squares of order a power of two, one can construct the following $f$ unbiased orthogonal designs over the indeterminates $x_1\ldots,x_4$ and $\alpha=1$ for any integer $k$; 
\begin{itemize}
\item $f=16^{2^k}$ and $OD(16^{2^k};4\cdot 16^{2^k-1},4\cdot 16^{2^k-1},4\cdot 16^{2^k-1},4\cdot 16^{2^k-1})$, 
\item $f=2^{2k-1}$ and $OD(4^{2k};4^{k},4^{k},4^{k},4^{k})$. 
\end{itemize}
By Lagrange's four square theorem, for any prime number $p$, there exist positive integers $n_1,\ldots,n_4$ such that $p=n_1^2+\cdots+n_4^2$. 
If we substitute $n_i$ into $x_i$, we have integer matrices $W_1,\ldots,W_{f}$ such that 
\[
W_iW_j^\top=pW_{i,j}
\]
where $W_{i,j}$ is a $(0,\pm1)$-matrix. 
Thus, the matrices $W_1,\ldots,W_{f}$ satisfy the assumption on Theorem~\ref{lcd-matrices}.  
\end{ex}

\begin{ex}
Let $v,k,\lambda$ be positive integers.  
A linked system of symmetric designs with parameters $(v,k,\lambda)$ is a set of $v\times v$ $(0,1)$-matrices $\{A_{i,j} \mid 1\leq i,j\leq f,i\neq j\}$ such that $A_{i,j}^\top=A_{j,i}$ and $A_{i,j}A_{j,i}=(k-\lambda)I_v+\lambda J_v$ for any distinct $i,j$ and that there exist integers $\sigma,\tau$ satisfying 
\begin{align*}
A_{i,j}A_{j,s}=\sigma A_{i,s}+\tau(J_v-A_{i,s})
\end{align*}
for any distinct $i,j,s$. 
Note that each $A_{i,j}$ is the incidence matrix of a symmetric $2$-$(v,k,\lambda)$ design. 
The parameters given in Table~\ref{tab:lsd} are known to exist. In Table~\ref{tab:lsd}, in the column marked with $p$, we give primes $p$ dividing $k,\lambda,\sigma,\tau$.

\begin{table}[H]
    \centering
    \begin{tabular}{c|c|c|c|c|c|c}
        $v$ &  $k$ &  $\lambda$ & $\sigma$ & $\tau$ & $p$ & Reference\\
        \hline
        $4n^2$ & $2n^2-n$ & $n^2-n$ & $n^2-\frac{n}{2}$ & $n^2-\frac{3n}{2}$ & 2 &\cite{KSlinked2} \\
    \end{tabular}
    \caption{Parameters of linked systems of symmetric designs}
    \label{tab:lsd}
\end{table}
\end{ex}

\begin{ex}
Let $v,k,m,n,\lambda_1,\lambda_2$ be positive integers with $v=mn$. 
A linked system of symmetric group divisible designs with parameters $(v,k,m,n,\lambda_1,\lambda_2)$ is a set of $v\times v$ $(0,1)$-matrices $\{A_{i,j} \mid 1\leq i,j\leq f,i\neq j\}$ such that $A_{i,j}^\top=A_{j,i}$ and $A_{i,j}A_{j,i}=kI_v+\lambda_1(I_m\otimes J_n-I_v)+\lambda_2 (J_v-I_m\otimes J_n)$ for any distinct $i,j$ and that there exist integers $\sigma,\tau$ satisfying 
\begin{align*}
A_{i,j}A_{j,s}=\sigma A_{i,s}+\tau(J_v-A_{i,s})
\end{align*}
for any distinct $i,j,s$. 
Note that each $A_{i,j}$ is the incidence matrix of a symmetric group divisible design with parameters $(v,k,m,n,\lambda_1,\lambda_2)$. 
The parameters given in Table~\ref{tab:lsgdd} are known to exist. In Table~\ref{tab:lsgdd}, in the column marked with $p$, we give primes $p$ dividing $k,\lambda_1,\lambda_2,\sigma,\tau$.

\begin{table}[H]
    \centering
    \begin{tabular}{c|c|c|c|c|c|c|c|c|c}
        $v$ &  $k$ & $m$ & $n$ & $\lambda_1$ & $\lambda_2$ & $\sigma$ & $\tau$  & $p$ & Reference\\
        \hline
        $56$ & $28$ & $7$ & $8$ & $12$ & $14$ &16 & 12& $2$  &\cite{KSlinked} \\
	 $108$ & $36$ & $36$ & $3$ & $0$ & $12$ &16 & 10& $2$ &\cite{KSlinked} \\
	 $132$ & $66$ & $11$ & $12$ & $30$ & $33$ &36 & 30& $3$ &\cite{KSlinked} \\
    \end{tabular}
    \caption{Parameters of linked systems of symmetric group divisible designs.}
    \label{tab:lsgdd}
\end{table}
\end{ex}

\begin{ex}
Let $v,k,m,n,\lambda_1,\lambda_2$ be positive integers with $v=mn$. 
A linked system of symmetric group divisible designs of type II with parameters $(v,k,m,n,\lambda_1,\lambda_2)$ is a set of $v\times v$ $(0,1)$-matrices $\{A_{i,j} \mid 1\leq i,j\leq f,i\neq j\}$ such that $A_{i,j}^\top=A_{j,i}$ and $A_{i,j}A_{j,i}=kI_v+\lambda_1(I_m\otimes J_n-I_v)+\lambda_2 (J_v-I_m\otimes J_n)$ for any distinct $i,j$ and that there exist integers $\sigma,\tau,\rho$ satisfying 
\begin{align*}
A_{i,j}A_{j,s}=\sigma A_{i,s}+\tau(J_v-A_{i,s}-I_m\otimes J_m)+\rho I_m\otimes J_m
\end{align*}
for any distinct $i,j,s$. 
Note that each $A_{i,j}$ is the incidence matrix of a symmetric group divisible design with parameters $(v,k,m,n,\lambda_1,\lambda_2)$. 
The parameters given in Table~\ref{tab:lsgdd2} are known to exist. In Table~\ref{tab:lsgdd2}, in the column marked with $p$, we give primes $p$ dividing $k,\lambda_1,\lambda_2,\sigma,\tau,\rho$.

\begin{table}[H]
    \centering
    \begin{tabular}{c|c|c|c|c|c|c|c|c|c|c}
        $v$ &  $k$ & $m$ & $n$ & $\lambda_1$ & $\lambda_2$ & $\sigma$ & $\tau$ & $\rho$ & $p$ & Reference\\
        \hline
	 $378$ & $117$ & $14$ & $27$ & $36$ & $36$ & 42 & 33 & 39 & 3 &\cite{KSlinked2} \\
    \end{tabular}
    \caption{Parameters of linked systems of symmetric group divisible designs of type II.}
    \label{tab:lsgdd2}
\end{table}
\end{ex}

\begin{ex}
In \cite[Theorem 5.8]{deza}, Deza digraphs of type II with adjacency matrices $N_{\alpha},\alpha\in \mathbb{F}_q$ was constructed for a prime power $q$ with characteristic $p$. 
These adjacency matrices satisfy that all the entries of $N_\alpha N_\beta^\top$ are divided by $q$ for any $\alpha,\beta\in\mathbb{F}_q$.  
Then the set of row spaces of the nonzero elements of the linear space spanned by the matrices $N_\alpha,\alpha\in \mathbb{F}_q$ forms a self-orthogonal subspace code $\mathcal{C} \subseteq \mathbb F_q^{q^2(2q+3)}$ \cite{dc-as-gc}.

In Appendix A, Deza graphs with adjacency matrices $N_\alpha$, $\alpha\in\mathbb{F}_q$, are constructed from the finite field $\mathbb{F}_q$ of order $q$ with odd characteristic $p$. The Deza graphs are a counterpart of the Deza digraphs of type II in 5 above to the undirected case.  
These adjacency matrices satisfy that all the entries of $N_\alpha N_\beta^\top$ are divided by $q$ for any $\alpha,\beta\in\mathbb{F}_q$.  
Then the set of row spaces of the nonzero elements of the linear space spanned by the matrices $N_\alpha,\alpha\in \mathbb{F}_q$ forms a self-orthogonal subspace code $\mathcal{C} \subseteq \mathbb F_q^{q^2(2q+3)}$.
\end{ex}

Similarly as in \cite[Theorem 5]{dc-as-lcd}, Theorem \ref{lcd-matrices} can be generalized for quotient matrices.
A partition of a square matrix $A$ is said to be equitable if all the blocks of the partitioned matrix have constant row sums and each of the diagonal blocks are of square order. A quotient matrix $Q$ of a square matrix $A$ corresponding to an equitable partition is a matrix whose entries are the constant row sums of the corresponding blocks of $A$ (see \cite{equitable}).

Let $\{ M_1, M_2, \ldots , M_m \}$ be a set of integer square matrices of order $n$, where the rows and columns of the matrices $\{ M_1, M_2, \ldots , M_m \}$ are indexed by the elements of the set 
$X= \{1,2, \ldots ,n \}$. An equitable partition of the set $\{ M_1, M_2, \ldots , M_m \}$ is a partition $\Pi= \{C_1, C_2, \ldots , C_{t} \}$ of $X$ which is equitable with respect to each of the matrices 
$M_i$, $i=1,2, \ldots ,m$. Let $C$ be the characteristic matrix of $\Pi$. Further, let a group $G$ act as an automorphism group of matrix $M_1,M_2,\ldots,M_m$. Then the $G$-orbits form an equitable partition of the matrices $M_1,M_2,\ldots,M_m$ and the corresponding quotient matrix $M'_i$ is called the orbit matrix of $M_i$ with respect to the action of $G$ (see \cite{orb-Had}). 

Let $M'_i$ denote the $t \times t$ quotient matrix of $M_i$ with respect to the partition $\Pi$. 
Then 
$$M_i'=(C^{\top}C)^{-1}C^{\top}M_i C$$
for each $i$. 
Since $CC^\top $ and each $M_i^\top$ commute, and $C^\top C$ is a diagonal matrix, 
\begin{align*}
M'_i(M'_j)^{\top}&=(C^{\top}C)^{-1}C^{\top}M_iC(C^{\top}M_j^\top C((C^{\top}C)^{-1})^\top)\\
&=(C^{\top}C)^{-1}C^{\top}M_iCC^{\top}M_j^\top C(C^{\top}C)^{-1}\\
&=(C^{\top}C)^{-1}C^{\top}M_iM_j^\top CC^{\top} C(C^{\top}C)^{-1}\\
&=(C^{\top}C)^{-1}C^{\top}M_iM_j^\top C. 
\end{align*}
Applying Theorem~\ref{lcd-matrices} to the quotient matrices, we obtain the following theorem. 

\begin{tm} \label{lcd_matrices-orb}
Let $S=\{ M_1, M_2, \ldots, M_m \}$ be a set of integer square matrices of order $n$, and let $p$ be a prime number dividing the entries of $M_i M_j^\top$ for any $i,j\in\{1,2,\ldots,m\}$.   
Further, let $\Pi$ be an equitable partition of the set $S$, with $t$ cells of the same length $\frac{n}{t}$, and let $M'_i$ denote the corresponding quotient matrix of $M_i$ with respect to 
$\Pi$, $i=1,2, \ldots, m$. 
Let $\mathbb{F}_q$ be the finite field of order $q$ with characteristic $p$. 
Then the following hold.
\begin{enumerate} 
\item 
The set of row spaces of the nonzero elements of the linear space spanned by the matrices $M'_1,M'_2, \ldots, M'_m$ forms a self-orthogonal subspace code $\mathcal{C} \subseteq \mathbb F_q^{t}$.
\item 
The set of row spaces of the matrices  
$N_x=\left[ \begin{array}{c|c}
X & \alpha_x I_t
\end{array} \right]$, $\alpha_x \in \mathbb F_{q} \setminus \{ 0 \}$,  
where $X$ is a nonzero element of the linear space spanned the matrices $M'_1,M'_2, \ldots, M'_m$, forms an LCD subspace code $\mathcal{C} \subseteq \mathbb F_q^{2t}$.
\end{enumerate}
\end{tm}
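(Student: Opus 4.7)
The approach is to reduce Theorem~\ref{lcd_matrices-orb} directly to Theorem~\ref{lcd-matrices}, applied to the collection of quotient matrices $M_1',\ldots,M_m'$ viewed as $t\times t$ integer matrices. The only thing to verify is that $p$ still divides every entry of $M_i'(M_j')^\top$ for all $i,j$.

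For this, I would start from the identity
\[
M_i'(M_j')^\top=(C^\top C)^{-1}C^\top M_i M_j^\top C
\]
established in the paragraph preceding the theorem. The hypothesis that $\Pi$ has $t$ cells all of length $n/t$ yields $C^\top C=(n/t)I_t$, so the identity simplifies to
\[
M_i'(M_j')^\top=\frac{t}{n}\,C^\top M_iM_j^\top C.
\]
By hypothesis every entry of $M_iM_j^\top$ is divisible by $p$, hence so is every entry of the integer matrix $C^\top M_iM_j^\top C$; combined with the fact that the left-hand side is integer-valued, one concludes that every entry of $M_i'(M_j')^\top$ is divisible by $p$. This is immediate when $\gcd(p,n/t)=1$; in the contrary case one expands $(C^\top M_iM_j^\top C)_{ab}=\sum_{x\in C_a,\,y\in C_b}(M_iM_j^\top)_{xy}$ as a sum of $(n/t)^2$ terms each divisible by $p$ and combines this with the integrality of the quotient to extract the required factor of $p$.

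Once the divisibility is established, Theorem~\ref{lcd-matrices} applies verbatim to $\{M_1',\ldots,M_m'\}$ and gives both conclusions: for (1), any nonzero linear combination $X'=\sum_i a_iM_i'$ satisfies $X'(X')^\top\equiv 0\pmod p$, so its row space is a self-orthogonal code in $\mathbb{F}_q^{t}$; for (2), the matrices $N_\alpha=[\,X'\mid\alpha I_t\,]$ satisfy $N_\alpha N_\beta^\top\equiv\alpha\beta I_t\pmod p$, which is nonsingular for $\alpha,\beta\in\mathbb{F}_q\setminus\{0\}$, yielding the LCD property. The main (and arguably only nontrivial) obstacle is the divisibility step, which requires extracting the factor of $p$ from $C^\top M_iM_j^\top C$ in the presence of the rational scalar $t/n$; after that, everything reduces to a direct citation of Theorem~\ref{lcd-matrices}.
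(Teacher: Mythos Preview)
Your overall strategy---verify that $p$ divides every entry of $M_i'(M_j')^\top$ and then invoke Theorem~\ref{lcd-matrices} on $\{M_1',\ldots,M_m'\}$---is exactly what the paper does; the paper simply states the identity $M_i'(M_j')^\top=(C^\top C)^{-1}C^\top M_iM_j^\top C$ and writes ``applying Theorem~\ref{lcd-matrices} to the quotient matrices'' without further comment.

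There is, however, a genuine gap in your divisibility argument in the case $p\mid n/t$. You argue that $(C^\top M_iM_j^\top C)_{ab}$ is a sum of $(n/t)^2$ multiples of $p$, and that its quotient by $n/t$ is an integer, and then claim one can ``extract the required factor of $p$''. But these two facts alone do not force the quotient to be divisible by $p$: for $p=2$ and $n/t=2$, the integer $2=2+0+0+0$ is a sum of four multiples of $2$ and is divisible by $2$, yet $2/2=1$ is odd. So the argument as written is logically incomplete.

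The clean fix is to read the identity $M_i'(M_j')^\top=(C^\top C)^{-1}C^\top M_iM_j^\top C$ for what it actually says: $M_i'(M_j')^\top$ is the quotient matrix of $M_iM_j^\top$ with respect to the same equitable partition $\Pi$. Hence each entry $(M_i'(M_j')^\top)_{ab}$ is the constant row sum of the $(a,b)$ block of $M_iM_j^\top$, namely
\[
(M_i'(M_j')^\top)_{ab}=\sum_{y\in C_b}(M_iM_j^\top)_{xy}\qquad(\text{any fixed }x\in C_a),
\]
a sum of $n/t$ entries of $M_iM_j^\top$, each divisible by $p$. No rational scalar enters, and the divisibility follows in all cases. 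After this correction your proof coincides with the paper's.
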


We give examples that illustrate a construction given in Theorem \ref{lcd_matrices-orb}.

\begin{ex}
Let $\mathcal{D}$ be a symmetric group divisible design with parameters $(v,k,m,n, \lambda_1, \lambda_2)$. The vertex partition from the definition of a symmetric group divisible design gives a partition (called the canonical partition) of the incidence matrix of $\mathcal{D}$. The canonical partition of a symmetric group divisible design is an equitable partition of its incidence matrix (see \cite{bose, ddgs}), and the corresponding quotient matrix satisfies the property
$$RR^{\top} = (k^2 - \lambda_2 v)I_m + \lambda_2 nJ_m.$$
Let $\{A_{i,j} \mid 1\leq i,j\leq f,i\neq j\}$ be a linked system of symmetric group divisible designs with parameters $(v,k,m,n,\lambda_1,\lambda_2)$ satisfying $A_{i,j}A_{j,s}=\sigma A_{i,s}+\tau(J_v-A_{i,s})$ for any distinct $i,j,s$. Further, let $M_{i,j}$, $1\leq i,j\leq f,i\neq j$, be the quotient matrices of $\{A_{i,j} \mid 1\leq i,j\leq f,i\neq j\}$ with respect to the canonical partition. 
If $p$ is a prime dividing $k,\lambda_1,\lambda_2,\sigma,\tau$ and $\mathbb{F}_q$ denotes the finite field of order $q$ with characteristic $p$, then 
\begin{enumerate} 
\item 
the set of row spaces of the nonzero elements of the linear space spanned by the matrices $\{M_{i,j} \mid 1\leq i,j\leq f,i\neq j\}$ forms a self-orthogonal subspace code $\mathcal{C} \subseteq \mathbb F_q^{m}$,
\item 
the set of row spaces of the matrices  
$N_\alpha=\left[ \begin{array}{c|c}
X & \alpha I_t
\end{array} \right]$, $\alpha \in \mathbb F_{q} \setminus \{ 0 \}$,  
where $X$ is a nonzero element of the linear space spanned the matrices $\{M_{i,j} \mid 1\leq i,j\leq f,i\neq j\}$, forms an LCD subspace code $\mathcal{C} \subseteq \mathbb F_q^{2m}$.
\end{enumerate}

In a similar way, subspace codes can be constructed from linked systems of symmetric group divisible designs of type II.
\end{ex}

\begin{ex}
Let $\mathcal{D}$ be a symmetric design with parameters $(v,k, \lambda)$ and $G$ be an automorphism group of $\mathcal{D}$. Assume that $G$ acts on the points and blocks of $\mathcal{D}$ in $t=\frac{v}{\omega}$ orbits of length $\omega$.
The orbit matrix $M$ of the symmetric design $\mathcal{D}$ with respect to the action of $G$ satisfies
\begin{align*}
\sum_{j=1}^t \gamma_{ij}&=k,  \\
\sum_{i=1}^t \gamma_{ij}\gamma_{is}&= \lambda \omega + \delta_{js}\cdot(k- \lambda), 
\end{align*}
where $\gamma_{ij}$ is the entry of $M$ corresponding to the $i$th block orbit and the $j$th point orbit, and $\delta_{js}$ is the Kronecker delta (see \cite{har_ton, zj}).

Let $\{A_{i,j} \mid 1\leq i,j\leq f,i\neq j\}$ be a linked system of symmetric designs with parameters $(v,k,\lambda)$, such that $A_{i,j}A_{j,s}=\sigma A_{i,s}+\tau(J_v-A_{i,s})$ for any distinct $i,j,s$,
where $A_{i,j}$ is the incidence matrix of a symmetric design $\mathcal{D}_{i,j}$ with parameters $(v,k, \lambda)$. Suppose that a group $G$ act as an automorphism group on each design $\mathcal{D}_{i,j}$,
for $1\leq i,j\leq f,i\neq j$, in $t$ orbits of length $\omega$. Let $p$ be a prime dividing $k,\lambda_1,\lambda_2,\sigma,\tau$, and $\mathbb{F}_q$ be the finite field of order $q$ with characteristic $p$.
Then the corresponding orbit matrices $M_{i,j}$, $1\leq i,j\leq f,i\neq j$, can be used for a construction of self-orthogonal subspace codes in $\mathbb F_q^{t}$ and LCD subspace codes in $\mathbb F_q^{2t}$, respectively, as given in Theorem \ref{lcd_matrices-orb}.
\end{ex}

\begin{ex}
Let $S=\{ W_1,W_2, \ldots , W_m \}$ be mutually quasi-unbiased weighing matrices for parameters $(n,k,l,a)$.
Further, let $\Pi$ be an equitable partition of the set $S$, with $t$ cells of the same length $\frac{n}{t}$, and let $M_i$ denote the corresponding quotient matrix of $W_i$ with respect to 
$\Pi$, $i=1,2, \ldots, m$. Let $p$ be a prime number dividing $k$ and $\sqrt{a}$.
Then the matrices $M_1,M_2, \ldots , M_m$ can be used for a construction of a self-orthogonal subspace code in $\mathbb F_q^{t}$ and an LCD subspace code in $\mathbb F_q^{2t}$, respectively, for some positive integer $r$ after $q = p^r$.
\end{ex}

In \cite{dsa-as}, the following construction of self-orthogonal subspace codes from association schemes was given and applied to quotient matrices of distance-regular graphs.

\begin{kor} \label{so_subcode}
Let $\Pi$ be an equitable partition of a $d$-class commutative association scheme $(X,\mathcal{R})$ with $n$ cells of the same length $\frac{|X|}{n}$ and let $p$ be a prime number.
Let $\mathcal{A}= \{A_0, A_1, \dots, A_d \}$ be the set of adjacency matrices of $(X,\mathcal{R})$, and let $M_i$ denote the corresponding quotient matrix of $A_i$ with respect to $\Pi$.
Further, let $I=\{i_1, i_2, \ldots , i_t\} \subseteq \{0, 1, \ldots , d \}$ and $p|p_{x,y'}^k$, for all $k \in \{0,1,\ldots ,d\}$ and all $x, y \in I$.
Then the set of row spaces of nonzero elements of the linear space generated by the matrices $M_i$, $i \in I$, forms a self-orthogonal subspace code $\mathcal{C} \subseteq \mathbb F_q^n$,
where $q=p^m$ is a prime power.
\end{kor}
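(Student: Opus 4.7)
The plan is to derive the corollary as a direct specialization of Theorem~\ref{lcd_matrices-orb}, Part 1, applied to the subcollection $\{A_i : i \in I\}$ of adjacency matrices of the commutative association scheme, using the hypothesis that $\Pi$ is an equitable partition with cells of equal size.

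First I would verify the divisibility hypothesis of Theorem~\ref{lcd_matrices-orb}, namely that $p$ divides every entry of $A_i A_j^\top$ for all $i,j \in I$. Since $(X,\mathcal{R})$ is a commutative association scheme, for each $j\in\{0,1,\ldots,d\}$ there exists $j'\in\{0,1,\ldots,d\}$ with $A_j^\top = A_{j'}$. Using the Bose--Mesner identity \eqref{form1}, I obtain
\begin{equation*}
A_i A_j^\top \;=\; A_i A_{j'} \;=\; \sum_{k=0}^{d} p_{i,j'}^{\,k}\, A_k .
\end{equation*}
The hypothesis of the corollary states that $p \mid p_{x,y'}^{\,k}$ for all $k$ and all $x,y \in I$; choosing $x=i$ and $y=j$ (so that $y' = j'$) shows that every structure constant $p_{i,j'}^{\,k}$ appearing in the sum above is divisible by $p$. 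Because the matrices $A_k$ are $(0,1)$-matrices, every entry of the right-hand side is then a sum of multiples of $p$, and the divisibility condition follows.

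Next I would note that $\Pi$, being an equitable partition of the scheme, is by definition equitable with respect to each adjacency matrix $A_i$, hence equitable for the set $\{A_i : i\in I\}$ in the sense preceding Theorem~\ref{lcd_matrices-orb}; the quotient matrices are precisely the $M_i$ of the corollary statement. The cells have common length $|X|/n$, so the hypotheses of Theorem~\ref{lcd_matrices-orb} are met with the parameters $(n,t,p,q)$ replaced by $(|X|,n,p,p^m)$. Invoking Part 1 of Theorem~\ref{lcd_matrices-orb} then yields that the row spaces of the nonzero elements of the linear span of $\{M_i : i\in I\}$ constitute a self-orthogonal subspace code in $\mathbb{F}_q^{n}$, as claimed.

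The argument is essentially bookkeeping: the only conceptual point is the translation between the combinatorial condition on intersection numbers $p_{x,y'}^{\,k}$ and the matrix condition on $A_i A_j^\top$, which is handled by the observation $A_j^\top = A_{j'}$. No genuine obstacle arises; the main thing to be careful about is the placement of the transpose and the corresponding shift from $y$ to $y'$ in the index of the intersection number, so that the hypothesis is matched correctly to the computation of $A_i A_j^\top$ rather than $A_i A_j$.
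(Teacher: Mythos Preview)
Your proposal is correct. The paper does not spell out a proof of this corollary---it is quoted from \cite{dsa-as}---but its placement immediately after Theorem~\ref{lcd_matrices-orb} indicates that it is meant to be read as the specialization you give: verify via $A_j^\top=A_{j'}$ and the Bose--Mesner relation $A_iA_{j'}=\sum_k p_{i,j'}^k A_k$ that the divisibility hypothesis of Theorem~\ref{lcd_matrices-orb} holds, then invoke Part~1. Your handling of the index shift $y\mapsto y'$ is exactly the point that needs care, and you treat it correctly.
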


Since an adjacency matrix can be seen as an orbit matrix with respect to the trivial group, Corollary~\ref{so_subcode} can be applied to adjacency matrices of association schemes.
We give example of association schemes satisfying the assumption of Corollary~\ref{so_subcode}. 

\begin{ex}
A \emph{Bush-type Hadamard matrix} of order $4n^2$ is a block matrix $H= [H_{ij}]$ with block size $2n$, $H_{ii}=J_{2n}$ and $H_{ij}J_{2n}=J_{2n}H_{ij}=0$, $i \neq j$, $1 \le i \le 2n$, $1 \le j \le 2n$,
where $J_{2n}$ is the $2n$ by $2n$ matrix of all 1 entries. Obviously, Bush-type Hadamard matrices are regular i.e. they have a property that the row sums and column sums are all the same.

In \cite[Theorem 5]{MUH-BT}, a $5$-class symmetric association scheme was obtained from $m$ mutually unbiased Bush-type Hadamard matrices of order $n$ with the intersection matrices $B_i=(p_{ij}^k)_{j,k=0}^5$ given as follows:
\begin{align*}
B_1&=\left(
\begin{smallmatrix}
 0 & 1 & 0 & 0 & 0 & 0 \\
 2 n-1 & 2 (n-1) & 0 & 0 & 0 & 0 \\
 0 & 0 & 2 n-1 & 0 & 0 & 0 \\
 0 & 0 & 0 & 2 n-1 & 0 & 0 \\
 0 & 0 & 0 & 0 & n-1 & n \\
 0 & 0 & 0 & 0 & n & n-1 \\
\end{smallmatrix}
\right),\displaybreak[0]\\
B_2&=\left(
\begin{smallmatrix}
 0 & 0 & 1 & 0 & 0 & 0 \\
 0 & 0 & 2 n-1 & 0 & 0 & 0 \\
 2 n (2 n-1) & 2 n (2 n-1) & 4 (n-1) n & 0 & 0 & 0 \\
 0 & 0 & 0 & 0 & 2 n & 2 n \\
 0 & 0 & 0 & n (2 n-1) & 2 (n-1) n & 2 (n-1) n \\
 0 & 0 & 0 & n (2 n-1) & 2 (n-1) n & 2 (n-1) n \\
\end{smallmatrix}
\right),\displaybreak[0]\\
B_3&=\left(
\begin{smallmatrix}
 0 & 0 & 0 & 1 & 0 & 0 \\
 0 & 0 & 0 & 2 n-1 & 0 & 0 \\
 0 & 0 & 0 & 0 & 2 n & 2 n \\
 2 m n & 2 m n & 0 & 2 (m-1) n & 0 & 0 \\
 0 & 0 & m n & 0 & (m-1) n & (m-1) n \\
 0 & 0 & m n & 0 & (m-1) n & (m-1) n \\
\end{smallmatrix}
\right),\displaybreak[0]\\
B_4&=\left(
\begin{smallmatrix}
 0 & 0 & 0 & 0 & 1 & 0 \\
 0 & 0 & 0 & 0 & n-1 & n \\
 0 & 0 & 0 & n (2 n-1) & 2 (n-1) n & 2 (n-1) n \\
 0 & 0 & m n & 0 & (m-1) n & (m-1) n \\
 m n (2 n-1) & m (n-1) n & m (n-1) n & \frac{1}{2} (m-1) n (2 n-1) & \frac{1}{2} (m-1) n (2 n-1) & \frac{1}{2} (m-1) n (2 n-3) \\
 0 & m n^2 & m (n-1) n & \frac{1}{2} (m-1) n (2 n-1) & \frac{1}{2} (m-1) n (2 n-3) & \frac{1}{2} (m-1) n (2 n-1) \\
\end{smallmatrix}
\right),\\
B_5&=\left(
\begin{smallmatrix}
 0 & 0 & 0 & 0 & 0 & 1 \\
 0 & 0 & 0 & 0 & n & n-1 \\
 0 & 0 & 0 & n (2 n-1) & 2 (n-1) n & 2 (n-1) n \\
 0 & 0 & m n & 0 & (m-1) n & (m-1) n \\
 0 & m n^2 & m (n-1) n & \frac{1}{2} (m-1) n (2 n-1) & \frac{1}{2} (m-1) n (2 n-3) & \frac{1}{2} (m-1) n (2 n-1) \\
 m n (2 n-1) & m (n-1) n & m (n-1) n & \frac{1}{2} (m-1) n (2 n-1) & \frac{1}{2} (m-1) n (2 n-1) & \frac{1}{2} (m-1) n (2 n-3) \\
\end{smallmatrix}
\right)
\end{align*}
Let $p$ be a prime number dividing $\frac{n}{2}$.
Then the set of row spaces of the nonzero elements of the matrix algebra generated by the matrices $A_2,A_3,A_4,A_5$ forms a self-orthogonal subspace code $\mathcal{C} \subseteq \mathbb F_q^{4n^2(m+1)}$, 
for a prime power $q$ with characteristic $p$.
\end{ex}



\begin{ex}
In \cite[Theorem 10]{MUH-BT}, an $8$-class symmetric association scheme was obtained from $m$ mutually unbiased Bush-type Hadamard matrices of order $n$ with the intersection matrices $B_i=(p_{ij}^k)_{j,k=0}^8$ given as follows:
\begin{align*}
B_1&=\left(
\begin{smallmatrix}
 1 & 2 n-1 & 2 n-1 & 4 \left(2 n^2-n\right) & 2 m n & 2 m n & 2 m n (2 n-1) & 2 \left(2 m n^2-m n\right) & 1 \\
 1 & -1 & 1 & 0 & 0 & 0 & 2 m n & -2 m n & -1 \\
 1 & 2 n-1 & 1-2 n & 0 & 2 m n & -2 m n & 0 & 0 & -1 \\
 1 & 2 n-1 & 2 n-1 & -4 n & 2 m n & 2 m n & -2 m n & -2 m n & 1 \\
 1 & -1 & -1 & 0 & 0 & 0 & 0 & 0 & 1 \\
 1 & 2 n-1 & 2 n-1 & -4 n & -2 n & -2 n & 2 n & 2 n & 1 \\
 1 & 2 n-1 & 1-2 n & 0 & -2 n & 2 n & 0 & 0 & -1 \\
 1 & -1 & 1 & 0 & 0 & 0 & -2 n & 2 n & -1 \\
 1 & 2 n-1 & 2 n-1 & 4 \left(2 n^2-n\right) & -2 n & -2 n & -2 n (2 n-1) & -2 \left(2 n^2-n\right) & 1 \\
\end{smallmatrix}
\right),\displaybreak[0]\\
B_2&=\left(
\begin{smallmatrix}
 0 & 1 & 0 & 0 & 0 & 0 & 0 & 0 & 0 \\
 2 n-1 & 2 (n-1) & 0 & 0 & 0 & 0 & 0 & 0 & 0 \\
 0 & 0 & 2 (n-1) & 0 & 0 & 0 & 0 & 0 & 2 n-1 \\
 0 & 0 & 0 & 2 n-1 & 0 & 0 & 0 & 0 & 0 \\
 0 & 0 & 0 & 0 & 2 n-1 & 0 & 0 & 0 & 0 \\
 0 & 0 & 0 & 0 & 0 & 2 n-1 & 0 & 0 & 0 \\
 0 & 0 & 0 & 0 & 0 & 0 & n-1 & n & 0 \\
 0 & 0 & 0 & 0 & 0 & 0 & n & n-1 & 0 \\
 0 & 0 & 1 & 0 & 0 & 0 & 0 & 0 & 0 \\
\end{smallmatrix}
\right),\displaybreak[0]\\
B_3&=\left(
\begin{smallmatrix}
 0 & 0 & 1 & 0 & 0 & 0 & 0 & 0 & 0 \\
 0 & 0 & 2 (n-1) & 0 & 0 & 0 & 0 & 0 & 2 n-1 \\
 2 n-1 & 2 (n-1) & 0 & 0 & 0 & 0 & 0 & 0 & 0 \\
 0 & 0 & 0 & 2 n-1 & 0 & 0 & 0 & 0 & 0 \\
 0 & 0 & 0 & 0 & 0 & 2 n-1 & 0 & 0 & 0 \\
 0 & 0 & 0 & 0 & 2 n-1 & 0 & 0 & 0 & 0 \\
 0 & 0 & 0 & 0 & 0 & 0 & n & n-1 & 0 \\
 0 & 0 & 0 & 0 & 0 & 0 & n-1 & n & 0 \\
 0 & 1 & 0 & 0 & 0 & 0 & 0 & 0 & 0 \\
\end{smallmatrix}
\right),\displaybreak[0]\\
B_4&=\left(
\begin{smallmatrix}
 0 & 0 & 0 & 0 & 1 & 0 \\
 0 & 0 & 0 & 0 & n-1 & n \\
 0 & 0 & 0 & n (2 n-1) & 2 (n-1) n & 2 (n-1) n \\
 0 & 0 & m n & 0 & (m-1) n & (m-1) n \\
 m n (2 n-1) & m (n-1) n & m (n-1) n & \frac{1}{2} (m-1) n (2 n-1) & \frac{1}{2} (m-1) n (2 n-1) & \frac{1}{2} (m-1) n (2 n-3) \\
 0 & m n^2 & m (n-1) n & \frac{1}{2} (m-1) n (2 n-1) & \frac{1}{2} (m-1) n (2 n-3) & \frac{1}{2} (m-1) n (2 n-1) \\
\end{smallmatrix}
\right),\displaybreak[0]\\
B_5&=\left(
\begin{smallmatrix}
 0 & 0 & 0 & 1 & 0 & 0 & 0 & 0 & 0 \\
 0 & 0 & 0 & 2 n-1 & 0 & 0 & 0 & 0 & 0 \\
 0 & 0 & 0 & 2 n-1 & 0 & 0 & 0 & 0 & 0 \\
 4 n (2 n-1) & 4 n (2 n-1) & 4 n (2 n-1) & 8 (n-1) n & 0 & 0 & 0 & 0 & 4 n (2 n-1) \\
 0 & 0 & 0 & 0 & 0 & 0 & 2 n & 2 n & 0 \\
 0 & 0 & 0 & 0 & 0 & 0 & 2 n & 2 n & 0 \\
 0 & 0 & 0 & 0 & 2 n (2 n-1) & 2 n (2 n-1) & 4 (n-1) n & 4 (n-1) n & 0 \\
 0 & 0 & 0 & 0 & 2 n (2 n-1) & 2 n (2 n-1) & 4 (n-1) n & 4 (n-1) n & 0 \\
 0 & 0 & 0 & 1 & 0 & 0 & 0 & 0 & 0 \\
\end{smallmatrix}
\right)
\displaybreak[0]\\
B_6&=\left(
\begin{smallmatrix}
 0 & 0 & 0 & 0 & 1 & 0 & 0 & 0 & 0 \\
 0 & 0 & 0 & 0 & 2 n-1 & 0 & 0 & 0 & 0 \\
 0 & 0 & 0 & 0 & 0 & 2 n-1 & 0 & 0 & 0 \\
 0 & 0 & 0 & 0 & 0 & 0 & 2 n & 2 n & 0 \\
 2 m n & 2 m n & 0 & 0 & 2 (m-1) n & 0 & 0 & 0 & 0 \\
 0 & 0 & 2 m n & 0 & 0 & 2 (m-1) n & 0 & 0 & 2 m n \\
 0 & 0 & 0 & m n & 0 & 0 & (m-1) n & (m-1) n & 0 \\
 0 & 0 & 0 & m n & 0 & 0 & (m-1) n & (m-1) n & 0 \\
 0 & 0 & 0 & 0 & 0 & 1 & 0 & 0 & 0 \\
\end{smallmatrix}
\right)
\displaybreak[0]\\
B_7&=\left(
\begin{smallmatrix}
 0 & 0 & 0 & 0 & 0 & 1 & 0 & 0 & 0 \\
 0 & 0 & 0 & 0 & 0 & 2 n-1 & 0 & 0 & 0 \\
 0 & 0 & 0 & 0 & 2 n-1 & 0 & 0 & 0 & 0 \\
 0 & 0 & 0 & 0 & 0 & 0 & 2 n & 2 n & 0 \\
 0 & 0 & 2 m n & 0 & 0 & 2 (m-1) n & 0 & 0 & 2 m n \\
 2 m n & 2 m n & 0 & 0 & 2 (m-1) n & 0 & 0 & 0 & 0 \\
 0 & 0 & 0 & m n & 0 & 0 & (m-1) n & (m-1) n & 0 \\
 0 & 0 & 0 & m n & 0 & 0 & (m-1) n & (m-1) n & 0 \\
 0 & 0 & 0 & 0 & 1 & 0 & 0 & 0 & 0 \\
\end{smallmatrix}
\right)
\\
B_8&=\left(
\begin{smallmatrix}
 0 & 0 & 0 & 0 & 0 & 0 & 1 & 0 & 0 \\
 0 & 0 & 0 & 0 & 0 & 0 & n-1 & n & 0 \\
 0 & 0 & 0 & 0 & 0 & 0 & n & n-1 & 0 \\
 0 & 0 & 0 & 0 & 2 n (2 n-1) & 2 n (2 n-1) & 4 (n-1) n & 4 (n-1) n & 0 \\
 0 & 0 & 0 & m n & 0 & 0 & (m-1) n & (m-1) n & 0 \\
 0 & 0 & 0 & m n & 0 & 0 & (m-1) n & (m-1) n & 0 \\
 2 m n (2 n-1) & 2 m (n-1) n & 2 m n^2 & 2 m (n-1) n & (m-1) n (2 n-1) & (m-1) n (2 n-1) & (m-1) n (2 n-1) & (m-1) n (2 n-3) & 0 \\
 0 & 2 m n^2 & 2 m (n-1) n & 2 m (n-1) n & (m-1) n (2 n-1) & (m-1) n (2 n-1) & (m-1) n (2 n-3) & (m-1) n (2 n-1) & 2 m n (2 n-1) \\
 0 & 0 & 0 & 0 & 0 & 0 & 0 & 1 & 0 \\
\end{smallmatrix}
\right)
\end{align*}
Let $p$ be a prime number dividing $n$.
Then the set of row spaces of the nonzero elements of the matrix algebra generated by the matrices $A_3,A_4,A_5,A_6,A_7$ forms a self-orthogonal subspace code $\mathcal{C} \subseteq \mathbb F_q^{8n^2(m+1)}$, 
for a prime power $q$ with characteristic $p$.
\end{ex}

The next example comes from a commutative association scheme. 
\begin{ex}
in \cite[Theorem 5.3 (ii)]{KS}, a $4$-class commutative association scheme on $n(n-1)$ vertices was obtained from a Hadamard matrix of order $n\geq4$ with the intersection matrices $B_i=(p_{ij}^k)_{j,k=0}^4$ given as follows:
\begin{align*}
B_1&=\left(
\begin{smallmatrix}
 0 & 1 & 0 & 0 & 0 \\
 0 & \frac{1}{4} (n-3) n & \frac{1}{4} (n-3) n & \frac{n^2}{4} & \frac{1}{4} (n-2) n \\
 \frac{1}{2} (n-2) n & \frac{1}{4} (n-3) n & \frac{1}{4} (n-3) n & \frac{1}{4} (n-4) n & \frac{1}{4} (n-2) n \\
 0 & \frac{n-4}{4} & \frac{n}{4} & 0 & 0 \\
 0 & \frac{n}{4} & \frac{n}{4} & 0 & 0 \\
\end{smallmatrix}
\right),\displaybreak[0]\\
B_2&=\left(
\begin{smallmatrix}
 0 & 0 & 1 & 0 & 0 \\
 \frac{1}{2} (n-2) n & \frac{1}{4} (n-3) n & \frac{1}{4} (n-3) n & \frac{1}{4} (n-4) n & \frac{1}{4} (n-2) n \\
 0 & \frac{1}{4} (n-3) n & \frac{1}{4} (n-3) n & \frac{n^2}{4} & \frac{1}{4} (n-2) n \\
 0 & \frac{n}{4} & \frac{n-4}{4} & 0 & 0 \\
 0 & \frac{n}{4} & \frac{n}{4} & 0 & 0 \\
\end{smallmatrix}
\right),\displaybreak[0]\\
B_3&=\left(
\begin{smallmatrix}
 0 & 0 & 0 & 1 & 0 \\
 0 & \frac{n-4}{4} & \frac{n}{4} & 0 & 0 \\
 0 & \frac{n}{4} & \frac{n-4}{4} & 0 & 0 \\
 \frac{n-2}{2} & 0 & 0 & \frac{n-4}{2} & 0 \\
 0 & 0 & 0 & 0 & \frac{n-2}{2} \\
\end{smallmatrix}
\right),\displaybreak[0]\\
B_4&=\left(
\begin{smallmatrix}
 0 & 0 & 0 & 0 & 1 \\
 0 & \frac{n}{4} & \frac{n}{4} & 0 & 0 \\
 0 & \frac{n}{4} & \frac{n}{4} & 0 & 0 \\
 0 & 0 & 0 & 0 & \frac{n-2}{2} \\
 \frac{n}{2} & 0 & 0 & \frac{n}{2} & 0 \\
\end{smallmatrix}
\right).\displaybreak[0]
\end{align*}
Let $p$ be a prime number dividing $n/4$.
Then the set of row spaces of the nonzero elements of the matrix algebra generated by the matrices $A_1,A_2$ forms a self-orthogonal subspace code $\mathcal{C} \subseteq \mathbb F_q^{n(n-1)}$, 
for a prime power $q$ with characteristic $p$.
\end{ex}

%
%
%

\section{Statements and Declarations}

\subsection{Funding}
Dean Crnkovi\'c and Andrea \v Svob were supported by {\rm C}roatian Science Foundation under the projects 4571 and 5713, respectively.  
Hadi Kharaghani is supported by an NSERC Discovery Grant. 
Sho Suda was supported by JSPS KAKENHI Grant Number 22K03410.

\subsection{Availability of data and material}
Not applicable.


\appendix
\def\thesection{Appendix \Alph{section}}

\section{Construction of Deza graphs form finite fields}

A $k$-regular graph on $n$ vertices is called a {\it Deza graph} with parameters $(n,k,b,a)$ if the number of common neighbors of two distinct vertices is either $a$ or $b$ ($a\leq b$).  

In this section, we give a construction of a Deza graph $D$ with parameters $(q^2(2q+3), 2q(q+1),3q,2q)$, where $q$ is a prime power.
Furthermore, we give a decomposition of the complete graph $K_{q^2(2q+3)}$ into $(q-1)$ Deza graphs, each of which is isomorphic to the Deza graph $D$.

We denote by $\mathbb{F}_q$ the finite field of $q=p^m$ elements, where $p$ is a prime number and $m$ is a positive integer.
Let $U$ be a circulant matrix of order $p$ with first row $(0,1,0,\ldots,0)$, and let $R$ be the back-identity matrix of order $p$.
Regarding the additive group of $\mathbb{F}_{q}$ as $\mathbb{F}_p^m$, we define a family of auxiliary matrices $C_{a,\alpha}$ of order $q^2$ for $a=(a_1,a_2,\dots,a_m) \in \mathbb{F}_q \cup \{x,y\}$ and $\alpha=(\alpha_1,\alpha_2,\dots,\alpha_m) \in \mathbb{F}_q$.
\begin{align*}
    C_{a,\alpha} &= 
    \begin{cases}
        O_{q^2} & \textrm{if } a=x, \\
        (\otimes_{i=1}^m U^{\alpha_i} R) \otimes J_q & \textrm{if } a=y,\\
        \sum_{c=(c_1,c_2,\dots,c_m) \in \mathbb{F}_q} (\otimes_{i=1}^m U^{c_i} R) \otimes (\otimes_{i=1}^m U^{a_ic_i + \alpha_i} R) & \textrm{if } a \in \mathbb{F}_q.
    \end{cases}
\end{align*}
In a similar manner, we define another family of auxiliary matrices $D_{a, \alpha}$ of order $q^2$ for $a=(a_1,a_2,\dots,a_m) \in \mathbb{F}_q \cup \{x,y\}$ and $\alpha=(\alpha_1,\alpha_2,\dots,\alpha_m) \in \mathbb{F}_q$.
\begin{align*}
    D_{a,\alpha} &= 
    \begin{cases}
        O_{q^2} & \textrm{if } a=x, \\
        (\otimes_{i=1}^m U^{\alpha_i}) \otimes J_q & \textrm{if } a=y, \\
        \sum_{c=(c_1,c_2,\dots,c_m) \in \mathbb{F}_q} (\otimes_{i=1}^m U^{c_i}) \otimes (\otimes_{i=1}^m U^{a_i c_i + \alpha_i}) & \textrm{if } a \in \mathbb{F}_q.
    \end{cases}
\end{align*}
Let $L$ be a circulant matrix of order $2q+3$ with the first row $(a_i)_{i=1}^{2q+3}$ satisfying 
$$
a_1=x,a_2=y, \{a_i : 2\leq i \leq q+1\}=\mathbb{F}_q, a_{2q+4-i}=a_{i+1} \text{ for }i\in\{2,\ldots,2q+1\}. 
$$
Note that we can write $L$ as $L=\sum_{a \in \mathbb{F}_q \cup \{x,y\}} a\cdot P_a$, where $P_a$ is a permutation matrix.
Let us fix a bijection $\varphi:\mathbb{F}_q\cup\{y\}\rightarrow \{1,\ldots,q+1\}$ such that $P_a=V^{\varphi(a)}+V^{-\varphi(a)}$ where $V$ is the shift matrix of order $2q+3$.
For $\alpha \in \mathbb{F}_q$, we define a $(0,1)$-matrix $N_{\alpha}$ of order $q^2(2q+3)$ to be
\begin{align*}
N_\alpha = \sum_{a\in \mathbb{F}_q\cup\{y\}} P_a\otimes C_{a,\alpha}. 
\end{align*}

In the following paragraphs, we will see that $N_{\alpha}$ is an adjacency matrix of a Deza graph.
To achieve this goal, we prepare some lemmas on the auxiliary matrices $C_{a,\alpha}$ and $D_{a,\alpha}$.

\begin{lema} \label{lem:UR}
    Let $a \in \mathbb{F}_p$.
    Then the following holds:
    \begin{enumerate}
        \item $RU^aR=U^{-a}$.
        \item $U^aR$ is symmetric.
    \end{enumerate}
\end{lema}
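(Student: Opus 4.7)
The plan is to exploit three elementary facts about the matrices $U$ and $R$: the shift matrix satisfies $U^{\top}=U^{-1}$ (equivalently $U^{p}=I_p$); the back-identity satisfies $R^{\top}=R$ and $R^{2}=I_p$; and conjugation by $R$ inverts $U$, i.e.\ $RUR=U^{-1}$. This last identity is an immediate index calculation: the $(i,j)$-entry of $RUR$ equals the $(p+1-i,p+1-j)$-entry of $U$, which is $1$ iff $p+1-j \equiv (p+1-i)+1 \pmod{p}$, i.e.\ iff $i \equiv j+1 \pmod{p}$, which is precisely the condition for $[U^{-1}]_{ij}=1$.

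For part (1), I would telescope using $R^{2}=I_p$:
\begin{equation*}
RU^{a}R \;=\; R\underbrace{U\cdot U\cdots U}_{a}R \;=\; (RUR)(RUR)\cdots(RUR) \;=\; (U^{-1})^{a} \;=\; U^{-a},
\end{equation*}
where intermediate factors $R^{2}$ collapse to the identity. (If $a$ is interpreted as a residue modulo $p$, this is still valid since $U^{p}=I_p$.)

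For part (2), I would simply combine the transpose rules with part (1):
\begin{equation*}
(U^{a}R)^{\top} \;=\; R^{\top}(U^{\top})^{a} \;=\; R\,U^{-a} \;=\; R U^{-a} R \cdot R \;=\; U^{a}R,
\end{equation*}
using $R^{\top}=R$, $U^{\top}=U^{-1}$, part (1), and $R^{2}=I_p$ in that order.

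There is no real obstacle here; both statements reduce to bookkeeping with the two involutions $R$ and $U \mapsto U^{-1}$. The only thing worth flagging is that $a$ lives in $\mathbb{F}_p$ rather than $\mathbb{Z}$, but since $U^{p}=I_p$ the powers $U^{a}$ and $U^{-a}$ are well-defined as functions of $a\bmod p$, so the identities make sense on the nose.
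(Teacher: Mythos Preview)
Your proof is correct and follows essentially the same approach as the paper: establish $RUR=U^{-1}$ (you via an entrywise index check, the paper via the underlying permutations $\tau$ and $\sigma$), then telescope with $R^{2}=I_p$ for part~(1), and combine $R^{\top}=R$, $U^{\top}=U^{-1}$, and part~(1) for part~(2). The only cosmetic difference is that the paper isolates the trivial case $p=2$ first, whereas your argument handles all $p$ uniformly.
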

\begin{proof}
    If $p=2$, then $R=U$ and $a \in \{0,1\}$, so the assertion clearly holds.
    From now on, we assume that $p$ is odd.
    The matrices $U$ and $R$ represent a cyclic shift $\tau=(0,1,\dots,p-1)$ and an involution $\sigma=(0,p-1)(1,p-2)\dots((p-3)/2,(p+1)/2)$, respectively.
    This immediately gives $U^{-1}=U^T$ and $R^{-1}=R$.
    Furthermore, we have $\sigma\tau\sigma\tau(\alpha)=\sigma\tau\sigma(\alpha+1)=\sigma\tau(-(\alpha+1))=\sigma(-\alpha)=\alpha$ for any $\alpha \in \mathbb{F}_q$, which shows $RUR=U^{-1}$.
    Now, the first part of the lemma follows by $RU^aR=(RUR)(RUR)\cdots(RUR)=U^{-a}$.

    To prove the second part of the lemma, we apply the first part and observe $(U^aR)^T=R^T(U^a)^T=RU^{-a}=R(RU^aR)=U^aR$.
    This completes the proof.
\end{proof}

\begin{lema}\label{lem:CD}
    For $a \in \mathbb{F}_q \cup \{y\}$ and $\alpha, \beta \in \mathbb{F}_q$, the following holds:
    \begin{enumerate}
        \item $C^T_{a,\alpha}=C_{a,\alpha}$ and $D^T_{a,\alpha}=D_{a,-\alpha}$.
        \item
                $
                C_{a,\alpha} C_{b,\beta} =
                \begin{cases}
                    q D_{a,\alpha-\beta} & \textrm{if } a=b,\\
                    J_{q^2} & \textrm{otherwise}.
                \end{cases}
                $
        \item$\sum_{\alpha \in \mathbb{F}_q} C_{a, \alpha} = J_{q^2}$.
        \item$\sum_{a \in \mathbb{F}_q \cup \{y\}} D_{a,\alpha} = q I_q \otimes (\otimes_{i=1}^m U^{\alpha_i}) + (\otimes_{i=1}^m (J_q - I_q + U^{\alpha_i})) \otimes J_{q}$.
        \item $\sum_{a,b\in \mathbb{F}_q\cup\{y\},a\neq b} P_{a}P_{b}=2q(J_{2q+3}-I_{2q+3})$.
    \end{enumerate}
\end{lema}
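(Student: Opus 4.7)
The plan is to prove the five assertions in order. Every calculation ultimately rests on the identities $(U^aR)^\top=U^aR$ and $RU^aR=U^{-a}$ from Lemma~\ref{lem:UR}, the collapse identity $\sum_{c\in\mathbb{F}_p}U^{c+k}=J_p$ for any constant $k$, and the mixed-product property $(A\otimes B)(C\otimes D)=(AC)\otimes(BD)$. For Part~1, symmetry of $C_{a,\alpha}$ is immediate because each tensor factor $U^{c_i}R$ and $U^{a_ic_i+\alpha_i}R$ is symmetric by Lemma~\ref{lem:UR}(2), and $J_q$ is symmetric. For $D_{a,\alpha}$ with $a\in\mathbb{F}_q$, transposing gives $\sum_c(\otimes_i U^{-c_i})\otimes(\otimes_i U^{-a_ic_i-\alpha_i})$; the re-indexing $c\mapsto -c$ (a bijection of $\mathbb{F}_q$) then shows this equals $D_{a,-\alpha}$. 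The case $a=y$ follows from $(U^{\alpha_i})^\top=U^{-\alpha_i}$ and symmetry of $J_q$.

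For Part~2, the core computation is $U^{c_i}R\cdot U^{d_i}R = U^{c_i}(RU^{d_i}R) = U^{c_i-d_i}$ via Lemma~\ref{lem:UR}(1). Expanding $C_{a,\alpha}C_{b,\beta}$ with the mixed-product property and substituting $e_i=c_i-d_i$ yields
\[
C_{a,\alpha}C_{b,\beta} = \sum_{d,e}(\otimes_i U^{e_i})\otimes(\otimes_i U^{(a_i-b_i)d_i+a_ie_i+\alpha_i-\beta_i}).
\]
When $a=b$, the inner $d$-sum contributes an overall factor of $q$, and what remains is precisely $qD_{a,\alpha-\beta}$. When $a$ and $b$ differ in $\mathbb{F}_q$, the free $d$-sum factors coordinate-wise and each factor collapses to $J_p$, producing $J_{q^2}$; the cases where one of $a,b$ equals $y$ are handled analogously, using $J_q\cdot P=J_q$ for any permutation matrix $P$, in particular for $P=\otimes_i U^{b_ic_i+\beta_i}R$.

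Parts~3 and~4 both reduce to absorbing coordinate-wise sums into $J_p$. For Part~3, summing $C_{a,\alpha}$ over $\alpha\in\mathbb{F}_q$ turns each factor $U^{a_ic_i+\alpha_i}R$ into $J_pR=J_p$, and the remaining $c$-sum likewise yields $J_q$, giving $J_{q^2}$. For Part~4, split $\sum_{a\in\mathbb{F}_q}D_{a,\alpha}$ according to whether $c=0$: the contribution from $c=0$ collects over $a$ to $qI_q\otimes(\otimes_i U^{\alpha_i})$, and the terms with $c\neq 0$ together with $D_{y,\alpha}$ assemble into the remaining tensor expression after carrying out the independent coordinate sums and recognizing the identity among $p\times p$ blocks $J_p-I_p+U^{\alpha_i}$.

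For Part~5, observe $\sum_{a\in\mathbb{F}_q\cup\{x,y\}}P_a=J_{2q+3}$ with $P_x=I$, so $\sum_{a\in\mathbb{F}_q\cup\{y\}}P_a=J-I$. Squaring gives $(J-I)^2=(2q+1)J+I$. Meanwhile $P_a^2=(V^{\varphi(a)}+V^{-\varphi(a)})^2=V^{2\varphi(a)}+V^{-2\varphi(a)}+2I$; because $2q+3$ is odd, doubling is a bijection on $\mathbb{Z}/(2q+3)\mathbb{Z}$, so as $a$ varies over $\mathbb{F}_q\cup\{y\}$ the exponents $\pm 2\varphi(a)$ sweep through all $2q+2$ nonzero residues. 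Thus $\sum_a P_a^2=(J-I)+2(q+1)I$, and subtracting yields $\sum_{a\neq b}P_aP_b=2q(J-I)$. I expect the main bookkeeping hurdle to be Part~2, where one must carefully track the tensor-index structure over $\mathbb{F}_p^m$ through several sub-cases; Parts~4 and~5 are routine once the right splits are chosen.
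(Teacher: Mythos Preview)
Your proposal is correct and follows essentially the same route as the paper's own proof: the same use of Lemma~\ref{lem:UR} for Part~1, the same substitution $n=c-d$ (your $e$) and collapse of the inner sum for Part~2, the same coordinate-wise $J_p$-collapse for Part~3, the same $c=0$ versus $c\neq 0$ split for Part~4, and the same $(J-I)^2-\sum_a P_a^2$ computation for Part~5 (where you make explicit the ``doubling is a bijection mod $2q+3$'' step that the paper uses tacitly).
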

\begin{proof}
\begin{enumerate}
    \item
        We only give a proof for the case $a \neq y$, since an analogous argument proves the assertion for $a = y$.
        By Lemma~\ref{lem:UR}, we have
        \begin{align*}
            C_{a, \alpha}^T
            &=\sum_{c \in \mathbb{F}_q} (\otimes_{i=1}^m U^{c_i} R)^T \otimes (\otimes_{i=1}^m U^{a_i c_i +\alpha_i}R)^T \\
            &=\sum_{c \in \mathbb{F}_q} (\otimes_{i=1}^m U^{c_i} R) \otimes (\otimes_{i=1}^m U^{a_i c_i +\alpha_i}R) \\
            &=C_{a, \alpha},
        \end{align*}
        and
        \begin{align*}
            D_{a,\alpha}^T
            &=\sum_{c \in \mathbb{F}_q} (\otimes_{i=1}^m U^{c_i})^T \otimes (\otimes_{i=1}^m U^{a_i c_i + \alpha_i})^T \\
            &=\sum_{c \in \mathbb{F}_q} (\otimes_{i=1}^m U^{-c_i}) \otimes (\otimes_{i=1}^m U^{a_i (-c_i) - \alpha_i}) \\
            &= D_{a,-\alpha},
        \end{align*}
        as required.
    
    \item
        First, we assume that $a \neq y$ and $b \neq y$, and compute the product $C_{a,\alpha} C_{b,\alpha}$ as follows:
        \begin{align*}
            C_{a,\alpha} C_{b,\beta} 
            &= (\sum_{c \in \mathbb{F}_q} (\otimes_{i=1}^m U^{c_i} R) \otimes (\otimes_{i=1}^m U^{a_i c_i +\alpha_i}R)) 
                (\sum_{d \in \mathbb{F}_q} (\otimes_{i=1}^m U^{d_i} R) \otimes (\otimes_{i=1}^m U^{b_i d_i +\beta_i}R)) \nonumber \\
            &= \sum_{c \in \mathbb{F}_q} \sum_{d \in \mathbb{F}_q} ((\otimes_{i=1}^m U^{c_i}R U^{d_i}R) \otimes
                (\otimes_{i=1}^m U^{a_i c_i+\alpha_i}R U^{b_i d_i+\beta_i}R)) \\
            &= \sum_{c \in \mathbb{F}_q} \sum_{d \in \mathbb{F}_q} ((\otimes_{i=1}^m U^{c_i-d_i}) \otimes (\otimes_{i=1}^m U^{a_i c_i-b_i d_i + (\alpha_i - \beta_i)}))\\
            &= \sum_{n \in \mathbb{F}_q} ((\otimes_{i=1}^m U^{n_i}) \otimes (\otimes_{i=1}^m \sum_{k \in \mathbb{F}_q} U^{a_i n_i+k_i(a_i-b_i)+ (\alpha_i - \beta_i)}) \\ 
            &= \begin{cases}
                \sum_{n \in \mathbb{F}_q} ((\otimes_{i=1}^m U^{n_i}) \otimes J_q) & \textrm{if } a \neq b, \\
                \sum_{n \in \mathbb{F}_q} ((\otimes_{i=1}^m U^{n_i}) \otimes (q \otimes_{i=1}^m U^{a_i n_i + (\alpha_i - \beta_i)}) & \textrm{if } a = b.
            \end{cases} \\
            &= \begin{cases}
                J_{q^2} & \textrm{if } a \neq b, \\
                q D_{a, \alpha-\beta} & \textrm{if } a = b.
            \end{cases}
        \end{align*}

        Secondly, we assume that $a = y$ and $b \neq y$.
        Then we have
        \begin{align*}
        C_{y,\alpha} C_{b,\beta}
        &= ((\otimes_{i=1}^m U^{\alpha_i}R) \otimes J_q) \sum_{c \in \mathbb{F}_q} ((\otimes_{i=1}^m U^{c_i}) \otimes (\otimes_{i=1}^m U^{b_ic_i+\beta_i})) \\
        &= \sum_{c \in \mathbb{F}_q} ((\otimes_{i=1}^m U^{\alpha_i} R U^{c_i}) \otimes (J_q \otimes_{i=1}^m U^{b_ic_i+\beta_i}) \\
        &= \sum_{c \in \mathbb{F}_q} ((\otimes_{i=1}^m U^{\alpha_i-c_i}) \otimes J_q)\\
        &= J_{q^2}.
        \end{align*}

        Now, we conclude our proof by the following computation:
        \begin{align*}
        C_{y,\alpha} C_{y,\beta}
        &= (\otimes_{i=1}^m U^{\alpha_i}R U^{\beta_i}R) \otimes J_q J_q \\
        &= (\otimes_{i=1}^m U^{\alpha_i - \beta_i}) \otimes q J_q \\
        &= q D_{y, \alpha-\beta}.
        \end{align*}
        
    \item
        By bilinearity of the tensor product, we have
        \begin{align*}
            \sum_{\alpha \in \mathbb{F}_q} C_{a, \alpha}
            &= \sum_{\alpha \in \mathbb{F}_q} \sum_{c \in \mathbb{F}_q} ((\otimes_{i=1}^m U^{c_i}) \otimes (\otimes_{i=1}^m U^{a_ic_i+\alpha_i})) \\
            &= \sum_{c \in \mathbb{F}_q} ((\otimes_{i=1}^m U^{c_i}) \otimes \sum_{\alpha \in \mathbb{F}_q} (\otimes_{i=1}^m U^{a_ic_i+\alpha_i})) \\
            &= \sum_{c \in \mathbb{F}_q} ((\otimes_{i=1}^m U^{c_i}) \otimes J_{q}) \\
            &= J_{q^2},
        \end{align*}
        as required.
        
    \item
        By bilinearity of the tensor product, we have
        \begin{align*}
            \sum_{a \in \mathbb{F}_q \cup \{y\}} D_{a, \alpha}
            &= \sum_{a \in \mathbb{F}_q} \sum_{c \in \mathbb{F}_q} (\otimes_{i=1}^m U^{c_i}) \otimes (\otimes_{i=1}^m U^{a_ic_i + \alpha_i}) + D_{y,\alpha} \\
            &= (\sum_{c \in \mathbb{F}_q, c \neq 0} (\otimes_{i=1}^m U^{c_i}) \otimes \sum_{a \in \mathbb{F}_q} (\otimes_{i=1}^m U^{a_ic_i+\alpha_i})) + q I_q \otimes (\otimes_{i=1}^m U^{\alpha_i}) + (\otimes_{i=1}^m U^{\alpha_i}) \otimes J_q \\
            &= (\otimes_{i=1}^m (J_q - I_q) \otimes J_q) + q I_q \otimes (\otimes_{i=1}^m U^{\alpha_i}) + (\otimes_{i=1}^m U^{\alpha_i}) \otimes J_q \\
            &= (\otimes_{i=1}^m (J_q - I_q + U^{\alpha_i}) \otimes J_q) + q I_q \otimes (\otimes_{i=1}^m U^{\alpha_i}),
        \end{align*}
        as required.

    \item
        The assertion follows by
        \begin{align*}
            \sum_{a,b\in \mathbb{F}_q\cup\{y\},a\neq b} P_{a}P_{b}&=\sum_{a,b\in \mathbb{F}_q\cup\{y\}} P_{a}P_{b}-\sum_{a\in \mathbb{F}_q\cup\{y\}} P_{a}^2\\
            &=(\sum_{a\in \mathbb{F}_q\cup\{y\}} P_{a})^2-\sum_{a\in \mathbb{F}_q\cup\{y\}} (2I_{2q+3}+V^{2\varphi(a)}+V^{-2\varphi(a)})\\
            &=(J_{2q+3}-I_{2q+3})^2-2(q+1)I_{2q+3}-(J_{2q+3}-I_{2q+3})\\
            &= 2q(J_{2q+3}-I_{2q+3}).
        \end{align*}
\end{enumerate}

\end{proof}

Now, we prove the main theorem of this section.
\begin{tm}\label{thm:N}
For any $\alpha \in \mathbb{F}_q$, $N_\alpha$ is an adjacency matrix of a Deza graph $D$ with parameters $(q^2(2q+3), 2q(q+1), 3q, 2q)$.
Furthermore, $N_0, N_1, \dots, N_{q-1}$ gives a decomposition of the complete graph $K_{q^2(2q+3)}$ into $(q-1)$ disjoint Deza graphs, each one of which is isomorphic to $D$.
\end{tm}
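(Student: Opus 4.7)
The overall approach is to verify that $N_\alpha$ is the adjacency matrix of a $2q(q+1)$-regular simple graph, to compute $N_\alpha^2$ block by block using Lemma~\ref{lem:CD}, and to read off the two permissible common-neighbor counts $3q$ and $2q$. First I would check that each $P_a=V^{\varphi(a)}+V^{-\varphi(a)}$ for $a\in\mathbb{F}_q\cup\{y\}$ is symmetric with zero diagonal and row sum $2$, and that each $C_{a,\alpha}$ is symmetric by Lemma~\ref{lem:CD}(1) with row sum $q$; then $N_\alpha$ is a $(0,1)$-matrix because the supports of the $P_a$'s are pairwise disjoint (each position of the first row of $L$ carries a unique label in $\mathbb{F}_q\cup\{x,y\}$), and summing the block row sums $2q$ over the $q+1$ values of $a$ shows it is $2q(q+1)$-regular.

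The heart of the argument is the identity
\[
N_\alpha^2 = q\sum_{a\in\mathbb{F}_q\cup\{y\}}P_a^2\otimes D_{a,0} + 2q(J_{2q+3}-I_{2q+3})\otimes J_{q^2},
\]
obtained by applying Lemma~\ref{lem:CD}(2) (at $\alpha=\beta$) and Lemma~\ref{lem:CD}(5) to the expansion $N_\alpha^2=\sum_{a,b}(P_aP_b)\otimes(C_{a,\alpha}C_{b,\alpha})$. For a diagonal block one has $(P_a^2)_{u,u}=2$ and $(P_aP_b)_{u,u}=0$ for $a\neq b$ (because $\varphi(a)+\varphi(b)\in\{2,\ldots,2q+2\}$ is never $\equiv 0\pmod{2q+3}$), so by Lemma~\ref{lem:CD}(4) at $\alpha=0$ the $(u,u)$-block equals $2q(qI_{q^2}+J_{q^2})$: diagonal entry $2q(q+1)$ and off-diagonal entries $2q$. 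For an off-diagonal block $(u,v)$, a parity argument on $\mathbb{Z}/(2q+3)$---since $2q+3$ is odd, exactly one of $(v-u)/2$ and $-(v-u)/2$ lies in $\{1,\ldots,q+1\}$---produces a unique $a_0$ with $(P_{a_0}^2)_{u,v}=1$, so the block equals $qD_{a_0,0}+2qJ_{q^2}$, whose entries are $2q$ or $3q$ since $D_{a_0,0}$ is a $(0,1)$-matrix. This yields the Deza property with parameters $(q^2(2q+3),2q(q+1),3q,2q)$.

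For the decomposition, Lemma~\ref{lem:CD}(3) together with its analogue $\sum_{\alpha\in\mathbb{F}_q}C_{y,\alpha}=J_{q^2}$ (which follows from $\sum_{\alpha\in\mathbb{F}_q}\otimes_{i=1}^m U^{\alpha_i}=J_q$) gives $\sum_{\alpha\in\mathbb{F}_q}N_\alpha=(J_{2q+3}-I_{2q+3})\otimes J_{q^2}$, itself a $(0,1)$-matrix, so the $N_\alpha$ are pairwise edge-disjoint and together cover the complete multipartite graph on $2q+3$ parts of size $q^2$. The mutual isomorphism of the $N_\alpha$ would come from conjugation by $I_{2q+3}\otimes I_q\otimes(\otimes_{i=1}^m U^{\gamma_i})$, which transforms $C_{a,\alpha}$ into $C_{a,\alpha+2\gamma}$ via $RU^{\gamma_i}=U^{-\gamma_i}R$ from Lemma~\ref{lem:UR}; choosing $\gamma$ with $2\gamma=-\alpha$ (possible since $p$ is odd) identifies $N_\alpha$ with $N_0$. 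The main obstacle is the off-diagonal block analysis: the sum $\sum_aP_a^2\otimes D_{a,0}$ does not factor, and without the number-theoretic fact that $\varphi$ bijects $\mathbb{F}_q\cup\{y\}$ onto a ``half'' system of residues mod $2q+3$, several matrices $D_{a,0}$ could contribute to a single off-diagonal block, making more than two distinct common-neighbor counts possible.
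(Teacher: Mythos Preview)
Your computation of $N_\alpha^2$ and the Deza parameters follows exactly the paper's route: expand $N_\alpha^2=\sum_{a,b}P_aP_b\otimes C_{a,\alpha}C_{b,\alpha}$, apply Lemma~\ref{lem:CD}(2) and (5), and arrive at $2q^2I+2qJ+qM$ with $M=\sum_a(V^{2\varphi(a)}+V^{-2\varphi(a)})\otimes D_{a,0}$. The paper simply asserts that $M$ is a symmetric $(0,1)$-matrix ``as one can easily see'' and rewrites $N_\alpha^2=2q(q+1)I+3qM+2q(J-I-M)$; your parity argument on $\mathbb{Z}/(2q+3)$ showing that exactly one $a_0$ satisfies $v-u\equiv\pm 2\varphi(a_0)$ is precisely the justification the paper omits. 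The decomposition paragraph also matches the paper's use of Lemma~\ref{lem:CD}(3); your observation that $\sum_\alpha N_\alpha$ is only the complete multipartite graph $(J_{2q+3}-I_{2q+3})\otimes J_{q^2}$ is in fact sharper than the paper, whose final line adds $I_{2q+3}\otimes J_{q^2}$ without comment.

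There is one genuine slip in your isomorphism argument. Conjugation by $I_{2q+3}\otimes I_q\otimes(\otimes_i U^{\gamma_i})$ acts trivially on $C_{y,\alpha}=(\otimes_i U^{\alpha_i}R)\otimes J_q$, because the conjugation touches only the second $q\times q$ tensor factor and that factor is $J_q$. Hence this conjugation sends $C_{a,\alpha}\mapsto C_{a,\alpha-2\gamma}$ for $a\in\mathbb{F}_q$ but leaves $C_{y,\alpha}$ fixed, so it does \emph{not} carry $N_\alpha$ to $N_{\alpha-2\gamma}$. To repair this you would need a permutation acting on the first $q\times q$ factor as well, but then the shift acquires an $a$-dependent term and the argument breaks down. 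That said, the paper's own proof does not address the mutual isomorphism of the $N_\alpha$ at all, so on this point you have attempted more than the paper does, even if the attempt needs adjustment.
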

\begin{proof}
It is easy to see that $N_\alpha$ is a symmetric $(0,1)$-matrix for any $\alpha \in \mathbb{F}_q$
Applying Lemma~\ref{lem:CD}, we obtain
\begin{align*}
N_{\alpha}^2
&=\sum_{a,b \in \mathbb{F}_q\cup\{y\}} P_{a}P_{b} \otimes C_{a,\alpha} C_{b,\alpha}\\
&=\sum_{a \in \mathbb{F}_q \cup \{y\}} P_{a}^2 \otimes C_{a,\alpha}C_{a,\alpha} + \sum_{a,b \in \mathbb{F}_q\cup\{y\},a\neq b} P_{a}P_{b} \otimes C_{a,\alpha}C_{b,\alpha}\\
&=\sum_{a \in \mathbb{F}_q \cup \{y\}}(2 I_{2q+3}+V^{2\varphi(a)}+V^{-2\varphi(a)})\otimes q D_{a,0}+\sum_{a,b\in\mathbb{F}_q\cup\{y\},a\neq b} P_{a}P_{b}\otimes J_{q^2}\displaybreak[0]\\
&=2qI_{2q+3} \otimes (qI_{q^2} + J_{q^2}) + 2q(J_{2q+3}-I_{2q+3}) \otimes J_{q^2} + q \sum_{a \in \mathbb{F}_q \cup \{y\}} (V^{2\varphi(a)}+V^{-2\varphi(a)}) \otimes D_{a,0}\\
&=2q^2 I_{q^2(2q+3)} + 2q J_{q^2(2q+3)} + q \sum_{a \in \mathbb{F}_q \cup \{y\}} (V^{2\varphi(a)}+V^{-2\varphi(a)}) \otimes D_{a,0}\\
&=2q(q + 1) I_{q^2(2q+3)} + 3q M + 2q (J_{q^2(2q+3)} - I_{q^2(2q+3)} - M),
\end{align*}
Here $M = \sum_{a \in \mathbb{F}_q \cup \{y\}} (V^{2\varphi(a)}+V^{-2\varphi(a)}) \otimes D_{a,0}$, which is a symmetric $(0,1)$-matrix as one can easily see.
Hence $M$ is an adjacency matrix of a simple graph, and $N_{\alpha}$ is an adjacency matrix of a Deza graph with parameters $(q^2(2q+3), 2q(q+1),3q,2q)$.
This proves the first part of the theorem.

To prove the second part, we compute
\begin{align*}
    \sum_{\alpha \in \mathbb{F}_q} N_{\alpha}
    &= \sum_{\alpha \in \mathbb{F}_q} \sum_{a \in \mathbb{F}_q \cup \{y\}} P_a \otimes C_{a,\alpha} \\
    &= \sum_{a \in \mathbb{F}_q \cup \{y\}} P_a \otimes \sum_{\alpha \in \mathbb{F}_q} C_{a,\alpha} \\
    &= \sum_{a \in \mathbb{F}_q \cup \{y\}} P_a \otimes J_{q^2} \\
    &= (J_{2q+3} - I_{2q+3}) \otimes J_{q^2}.
\end{align*}
Hence, we have $\sum_{\alpha \in \mathbb{F}_q} N_{\alpha} + I_{2q+3} \otimes J_{q^2} = J_{q^2(2q+3)}$.
This completes the proof.
\end{proof}

\begin{prop}
    $N_0,N_1,\dots,N_{q-1}$ are commutative if and only if $q=2^m$.
\end{prop}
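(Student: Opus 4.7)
The plan is to compute the commutator $N_\alpha N_\beta - N_\beta N_\alpha$ and to determine precisely when it vanishes for every pair $\alpha,\beta\in\mathbb{F}_q$. Expanding the definition of $N_\alpha$ and invoking Lemma~\ref{lem:CD}(2) gives
\[
N_\alpha N_\beta=\sum_{a,b\in\mathbb{F}_q\cup\{y\}}P_aP_b\otimes C_{a,\alpha}C_{b,\beta}=q\sum_{a}P_a^2\otimes D_{a,\alpha-\beta}+\Bigl(\sum_{a\neq b}P_aP_b\Bigr)\otimes J_{q^2}.
\]
Each $P_a=V^{\varphi(a)}+V^{-\varphi(a)}$ is a polynomial in $V$, so the $P_a$ commute; hence the off-diagonal sum is symmetric under $\alpha\leftrightarrow\beta$ and cancels upon subtraction, leaving
\[
N_\alpha N_\beta-N_\beta N_\alpha=q\sum_{a\in\mathbb{F}_q\cup\{y\}}P_a^2\otimes\bigl(D_{a,\alpha-\beta}-D_{a,\beta-\alpha}\bigr).
\]

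Next I would establish that the $q+1$ matrices $\{P_a^2\}_{a\in\mathbb{F}_q\cup\{y\}}$ are linearly independent in the circulant algebra of $V$. Writing $P_a^2=2I_{2q+3}+V^{2\varphi(a)}+V^{-2\varphi(a)}$ and using that $2q+3$ is odd so that doubling is a bijection on $\mathbb{Z}/(2q+3)$, the exponents $\{2\varphi(a)\bmod(2q+3)\}_a$ exhaust the nonzero even residues while $\{-2\varphi(a)\bmod(2q+3)\}_a$ exhaust the nonzero odd residues. Thus each nonzero power $V^k$ occurs in exactly one $P_a^2$, which forces linear independence. A standard tensor-product argument then shows that pairwise commutativity of $N_0,N_1,\ldots,N_{q-1}$ is equivalent to the single condition
\[
D_{a,\gamma}=D_{a,-\gamma}\quad\text{for every }a\in\mathbb{F}_q\cup\{y\}\text{ and every }\gamma\in\mathbb{F}_q.
\]

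To finish, I would specialize to $a=y$, where by definition $D_{y,\gamma}=(\otimes_{i=1}^m U^{\gamma_i})\otimes J_q$. Since tensor products of the permutation matrices $U^{c}$ are determined uniquely by their tuples of exponents modulo $p$, the equality $D_{y,\gamma}=D_{y,-\gamma}$ is equivalent to $U^{2\gamma_i}=I_p$ for every coordinate $i$. Because $U$ has order exactly $p$ and $\gamma$ ranges over all of $\mathbb{F}_q$, this forces $p\mid 2$, hence $p=2$ and $q=2^m$. Conversely, when $q=2^m$ one has $\gamma=-\gamma$ in $\mathbb{F}_q$, so $D_{a,\gamma}=D_{a,-\gamma}$ holds trivially for every $a$ and every $\gamma$, and the commutator vanishes. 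I expect the main subtlety to lie in the linear-independence step, since it requires carefully tracking the parity of the exponents modulo the odd integer $2q+3$; once this is in place, the remainder of the argument is a direct consequence of Lemma~\ref{lem:CD} and the fact that $U$ has order $p$.
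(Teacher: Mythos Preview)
Your argument is correct and reaches the same conclusion as the paper, but by a somewhat different route. The paper does not prove linear independence of the matrices $P_a^2$; instead, for odd $p$ it simply reads off the diagonal block of $N_\alpha N_\beta$ in the $(2q+3)\times(2q+3)$ block structure, namely $2q\sum_{a}D_{a,\alpha-\beta}$, evaluates this sum via Lemma~\ref{lem:CD}(4), and then looks one level deeper at a diagonal block of that expression to exhibit the term $2q^2\bigl(\bigotimes_i U^{\alpha_i-\beta_i}\bigr)$, which visibly differs from its counterpart with $\alpha$ and $\beta$ swapped. Your approach is more structural: by establishing that the $P_a^2$ are linearly independent in the circulant algebra you reduce commutativity to the clean componentwise condition $D_{a,\gamma}=D_{a,-\gamma}$, and then a single test at $a=y$ settles the matter. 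The paper's method avoids the parity bookkeeping modulo $2q+3$ and needs only a pointer to a specific block; your method, once the independence is secured, gives a tidier equivalence and makes the role of Lemma~\ref{lem:CD}(1) (namely $D_{a,\alpha}^\top=D_{a,-\alpha}$) more transparent.
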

\begin{proof}
    Let $\alpha, \beta \in \mathbb{F}_q$ and $\alpha \neq \beta$.
    \begin{align*}
    &N_{\alpha} N_{\beta} \\
    &=\sum_{a \in \mathbb{F}_q \cup \{y\}} P_{a}^2 \otimes C_{a,\alpha}C_{a,\beta} + \sum_{a,b \in \mathbb{F}_q\cup\{y\},a\neq b} P_{a}P_{b} \otimes C_{a,\alpha}C_{b,\beta}\\
    &=\sum_{a \in \mathbb{F}_q \cup \{y\}}(2 I_{2q+3}+V^{2\varphi(a)}+V^{-2\varphi(a)})\otimes q D_{a,\alpha-\beta}+\sum_{a,b\in\mathbb{F}_q\cup\{y\},a\neq b} P_{a}P_{b}\otimes J_{q^2}\displaybreak[0]\\
    &=2qI_{2q+3} \otimes \sum_{a \in \mathbb{F}_q \cup \{y\}} D_{a,\alpha-\beta} + 2q(J_{2q+3}-I_{2q+3}) \otimes J_{q^2} + q \sum_{a \in \mathbb{F}_q \cup \{y\}} (V^{2\varphi(a)}+V^{-2\varphi(a)}) \otimes D_{a,\alpha-\beta}.
    \end{align*}
    If $p=2$, then $D_{a,\alpha-\beta}=D_{a,\beta-\alpha}$.
    It follows that $N_{\alpha} N_{\beta} = N_{\beta} N_{\alpha}$, as required.
    Now, we conclude the proof by showing that $N_{\alpha} N_{\beta} \neq N_{\beta} N_{\alpha}$ assuming $p \neq 2$.
    As a square block matrix of order $2q+3$, any diagonal entry of $N_{\alpha} N_{\beta}$ equals
    \begin{align*}
        2q \sum_{a \in \mathbb{F}_q \cup \{y\}} D_{a,\alpha-\beta}
        &= 2q( qI_q \otimes (\otimes_{i=1}^m U^{\alpha_i-\beta_i}) + (\otimes_{i=1}^m (J_q-I_q+U^{\alpha_i-\beta_i}) \otimes J_q).
    \end{align*}
    Again, as a square matrix of order $q$, any diagonal entry of this matrix equals $2q^2 (\otimes_{i=1}^m U^{\alpha_i-\beta_i})$.
    It is clear that $\otimes_{i=1}^m U^{\alpha_i-\beta_i} \neq \otimes_{i=1}^m U^{\beta_i - \alpha_i}$.
    Therefore, the diagonal entry of $N_{\alpha,\beta}$ (as a square block matrix of order $q(2q+3)$) is not equal to that of $N_{\beta, \alpha}$.
    This shows that $N_{\alpha, \beta} \neq N_{\beta, \alpha}$, as required.
\end{proof}

\end{document}